\DeclareMathOperator{\sgn}{sgn}
\newtheorem{remark}{Remark}
\newtheorem{theorem}{Theorem}
\newtheorem{lemma}{Lemma}
\newtheorem{proposition}{Proposition}
\newcommand{\keywords}[1]{\par\addvspace\baselineskip
\noindent\enspace\ignorespaces#1}
\newcommand{\modch}{\color{black}}
\def\R{\mathbb{R}}
\begin{document}

\title{Optimal convergence rates for the invariant density estimation of jump-diffusion processes}

\author{Chiara Amorino\thanks{ Universit\'e du Luxembourg, L-4364 Esch-Sur-Alzette, Luxembourg. CA gratefully acknowledges financial support of ERC Consolidator Grant 815703 “STAMFORD: Statistical Methods for High Dimensional Diffusions”.} and Eulalia Nualart\thanks{Universitat Pompeu Fabra and Barcelona School of Economics, Department of Economics and Business, Ram\'on Trias Fargas 25-27, 08005
Barcelona, Spain. EN acknowledges support from the Spanish MINECO grant PGC2018-101643-B-I00 and
Ayudas Fundacion BBVA a Equipos de Investigaci\'on Cient\'ifica 2017.}} 

\maketitle

\begin{abstract}
 We aim at estimating the invariant density associated to a stochastic differential equation with jumps in low dimension, which is for $d=1$ and $d=2$.  We consider a class of {\modch fully non-linear} jump diffusion processes
whose invariant density belongs to some H\"older space.
Firstly,  in dimension one, we show that the kernel density estimator achieves the convergence rate $\frac{1}{T}$, which is the optimal rate in the absence of jumps. This improves the convergence rate obtained in \cite{Chapitre 4}, which depends on the Blumenthal-Getoor index for $d=1$ and is equal to $\frac{\log T}{T}$ for $d=2$.
Secondly, {\modch when the jump and diffusion coefficients are constant and the jumps are finite}, we show that is not possible to find an estimator with faster rates of estimation. Indeed, we get some  lower bounds with the same rates  $\{\frac{1}{T},\frac{\log T}{T}\}$ in the mono and bi-dimensional cases, respectively.
Finally, we obtain the asymptotic normality of the estimator in the one-dimensional case {\modch for the fully non-linear process}.
\end{abstract}

\keywords{{\bf Keywords:} Minimax risk, convergence rate, non-parametric statistics, ergodic diffusion with jumps, L\'evy driven SDE, invariant density estimation}

\section{Introduction}
Solutions to L\'evy-driven stochastic differential equations have recently attracted a lot of attention in the literature due to its many
applications in various areas such as finance, physics, and neuroscience. Indeed, it includes some important examples from finance such as the well-known Kou model in \cite{Kou02}, the Barndorff-Nielsen-Shephard model (\cite{BarShe01}), and the Merton model (\cite{Merton76}) to name just a few. An important example of application of jump-processes in neuroscience is the stochastic
Morris-Lecar neuron model presented in \cite{Mor_Lec}. As a consequence, statistical inference for jump processes has recently become an active domain of research.

We consider the process $X=(X_t)_{t \ge 0}$ solution to the following stochastic differential equation with jumps:
\begin{equation} \label{eq: model intro}
X_t= X_0 + \int_0^t b( X_s)ds + \int_0^t a(X_s)dB_s + \int_0^t \int_{\mathbb{R}^d_0}
\gamma(X_{s^-})z (\nu(ds,dz)-F(z) dz ds),
\end{equation}
where $(B_t)_{t \ge 0}$ is a $d$-dimensional Brownian motion and $\nu$ is a Poisson random measure on $\mathbb{R}_+ \times \mathbb{R}^d$ associated to a L\'evy process $(L_t)_{t \geq 0}$ with L\'evy density function $F$. We focus on the estimation of the invariant density $\mu$ associated to the jump-process solution to \eqref{eq: model intro} in low dimension, which is for $d=1$ and $d=2$. In particular, assuming that a continuous record of $(X_t)_{t \in [0,T]}$ is available, our goal is to propose a non-parametric kernel estimator for the estimation of the stationary measure and to discuss its convergence rate for large $T$.

The same framework has been considered in some recent papers such as \cite{Chapitre 4}, \cite{Strauch_new} (Section 5.2), and \cite{lowerbound}. In the first paper, it is shown that the kernel estimator achieves the following convergence rates for the pointwise estimation of the invariant density: $\frac{\log T}{T}$ for $d=2$ and $\frac{(\log T)^{(2 - \frac{(1 + \alpha)}{2}) \lor 1}}{T}$ for $d=1$ (where $\alpha$ is the Blumenthal-Getoor index).
We recall that, in the absence of jumps, the optimal convergence rate in the one-dimensional case is $\frac{1}{T}$, while the one found in \cite{Chapitre 4} depends on the jumps and  belongs to the interval $(\frac{\log T}{T}, \frac{(\log T)^\frac{3}{2}}{T})$. 

In this paper, we wonder if such a deterioration on the rate is because of the presence of jumps or the used approach. Indeed, our purpose is to look for a new approach to recover a better convergence rate in the one-dimensional case (hopefully the same as in the continuous case) and to discuss the optimality of such a rate. This new approach will also lead to the asymptotic normality of the proposed estimator. After that, we will discuss the optimality of the convergence rate in the bi-dimensional case. This will close the circle of the analysis of the convergence rates for the estimation of the invariant density of jump-diffusions, as the convergence  rates and their optimality in the case $d \ge 3$ have already been treated in detail in \cite{lowerbound}. 

Beyond these works, to our best knowledge, the literature 
concerning non-parametric estimation of diffusion processes with jumps is not wide. One of the few examples is given by Funke and Schmisser: in \cite{FunSch18} they investigate the non parametric adaptive estimation of the drift of an integrated jump diffusion process,  while in \cite{Sch19}, Schmisser deals with the non-parametric adaptive estimation of the coefficients of a jumps diffusion process.
To name other examples, in \cite{DL} the authors estimate in a non-parametric way the drift of a diffusion with jumps driven by a Hawkes process, while in \cite{Hawkes} the volatility and the jump coefficients are considered. 

On the other hand, the problem of invariant density estimation has been considered by many authors (see e.g. \cite{Ngu79}, \cite{Del80}, \cite{Bos98}, \cite{Zan01}, and \cite{Banon}) in several different frameworks: it is at the same time a long-standing problem and a highly active current topic of research. 
One of the reasons why the estimation of the invariant density has attracted the attention of many statisticians is the huge amount of numerical methods to which it is connected, the MCMC method above all. An approximation algorithm for the computation of the invariant density can be found for example in \cite{LamPag02} and \cite{Pan08}. Moreover, invariant distributions are essential for the analysis of the stability of stochastic differential systems (see e.g. \cite{Has80} and \cite{Banon}). 

In \cite{Banon}, \cite{BanNgu}, and \cite{Bosq9} some kernel estimators are used to estimate the marginal density of a continuous time process. When $\mu$ belongs to some H\"older class whose smoothness is $\beta$, they prove under some mixing conditions that their pointwise $L^2$ risk achieves the standard rate of convergence $T^{\frac{2 \beta}{2 \beta + 1}}$ and the rates are minimax in their framework. 
Castellana and Leadbetter proved in \cite{Cas_Lea} that, under condition \textbf{CL} below, the density can be estimated with the parametric rate $\frac{1}{T}$ by some non-parametric estimators (the kernel ones among them). 

{\modch In order to introduce condition \textbf{CL} it is necessary to request that the process $X$ belongs to a class of real processes with common marginal density $\mu$ with respect to the Lebesgue measure on $\R$ and such that the joint density of $(X_s, X_t)$ exists for all $s \neq t$, it is measurable and satisfies $\mu_{(X_s, X_t)} = \mu_{(X_t, X_s)} = \mu_{(X_0, X_{t-s})}$ and it is denoted by $\mu_{|t-s|}$ for all $s, t \in \R$. We also denote by $g_u$ the function $g_u(x,y) = \mu_u(x, y) - \mu(x) \mu(y)$. Then, condition \textbf{CL} writes as follows:}
\begin{itemize}
\item[\textbf{CL:}] $u \mapsto \left \| g_u \right \|_\infty$ is integrable on $(0, \infty)$ and $g_u(\cdot, \cdot)$ is continuous for each $u > 0$. 
\end{itemize}
In our context, $g_u(x, y)= \mu (x) p_u(x, y) - \mu(x) \mu(y)$, where $p_u(x, y)$ is the transition density. More precisely, they shed light to the fact that local irregularities of the sample paths provide some additional information. Indeed, if the joint distribution of $(X_0, X_t)$ is not too close to a singular distribution for $|t|$ small, then it is possible to achieve the superoptimal rate $\frac{1}{T}$ for the pointwise quadratic risk of the kernel estimator. Condition {\bf CL} can be verified for ergodic continuous diffusion processes (see \cite{Ver99} for sufficient conditions). 
The paper of Castellana and Leadbetter led to a lot of works regarding the
estimation of the common marginal distribution of a continuous time
process. In \cite{Bos97}, \cite{Bos98}, \cite{Che94}, \cite{Kut97}, and \cite{Bla96} several related results and examples can be found. 

An alternative to the kernel density estimator is given by the local time density estimator, which was proposed by Kutoyants in \cite{Kut98} in the
case of diffusion processes and was extended by Bosq and Davydov in \cite{BosDar99} to a more general context. The latest have proved that, under a condition which is mildly weaker than {\bf CL}, the mean squared error of the local time estimator reaches the full rate $\frac{1}{T}$. Leblanc built in \cite{Leb97} a wavelet estimator of a density belonging to some general Besov space and proved that, if the process is geometrically strong mixing and a condition like {\bf CL} is satisfied, then its $L^p$-integrated risk converges at rate $\frac{1}{T}$ as well. 
In \cite{Comte_Mer} the authors built a projection estimator and showed that
its $L^2$-integrated risk achieves the parametric rate $\frac{1}{T}$ under a condition named WCL, which is blandly different compared to {\bf CL}. 

\begin{itemize}
\item[\textbf{WCL:}] There exists a positive integrable function $k$ (defined on $\mathbb{R}$) such that
$$\sup_{y \in \mathbb{R}} \int_0^\infty \vert g_u(x, y) \vert du \le k(x), \qquad \text{ for all } x \in \mathbb{R}.$$
\end{itemize}

In this paper, we will show that our mono-dimensional jump-process satisfies a local irregularity condition {\bf WCL1} and an asymptotic independence condition {\bf WCL2} (see Proposition \ref{prop: WCL satisfied}), two conditions in which the original condition {\bf WCL} can be decomposed. In this way, it will be possible to show that the $L^2$ risk for the pointwise estimation of the invariant measure achieves the superoptimal rate $\frac{1}{T}$, using our kernel density estimator. Moreover, the same conditions will result in the asymptotic normality of the proposed estimator. Indeed, as we will see in the proof of Theorem \ref{th: as norm}, the main challenge in this part is to justify the use of dominated convergence theorem, which will ensured by conditions {\bf WCL1} and {\bf WCL2}.
We will find in particular that, for any collection $(x_i)_{1 \le i\le m}$ of real numbers, we have
$$
\sqrt{T}(\hat{\mu}_{h, T} (x_i) - \mu (x_i), \, 1 \le i \le m) \xrightarrow{\mathcal{D}} N^{(m)} (0, \Sigma^{(m)}) \text{ as } T \rightarrow \infty,
$$
where $\hat{\mu}_{h, T}$ is the kernel density estimator and
$$\Sigma^{(m)} := (\sigma(x_i, x_j))_{1 \le i, j \le m}, \qquad \sigma(x_i, x_j) := 2 \int_0^{\infty} g_u(x_i, x_j) du.$$
We remark that the precise form of the equation above allows us to
construct tests and confidence sets for the density. 

We have found the convergence rate $\left \{ \frac{1}{T}, \frac{\log T}{T} \right \}$ for the risk associated to our kernel density estimator for the estimation of the invariant density for $d=1$ and $d=2$.
Then, some questions naturally arise: are the convergence rates the best possible or is it possible to improve them by using other estimators? In order to answer, we consider a simpler model where both the volatility and the jump coefficient are constant and the intensity of the jumps is finite. Then, we look for a lower bound for the risk at a point $x \in \mathbb{R}^d$ defined as in equation \eqref{eq: def risk} below. The first idea is to use the two hypothesis method (see Section 2.3 in \cite{Ts}). To do that, the knowledge of the link between the drift $b$ and the invariant density $\mu_b$ is essential. {\modch In} absence of jumps such link is explicit, but in our context it is more challenging. As shown in \cite{Arnaud_Yoshida} and \cite{lowerbound}, it is possible to find the link knowing that the invariant measure has to satisfy $A^* \mu_b =0$, where $A^*$ is the adjoint of the generator of the considered diffusion. This method allows us to show that the superoptimal rate $\frac{1}{T}$ is the best possible for the estimation of the invariant density in $d=1$, but it fails in the bi-dimensional case (see Remark 1 below for details).
Finally, we use a finite number of hypotheses to prove a lower bound in the bi-dimensional case. This requires a detailed analysis of the Kullback divergence between the probability laws associated to the different hypotheses. Thanks to that, it is possible to recover the optimal rate $\frac{\log T}{T}$ in the two-dimensional case. 

The paper is organised as follows. In Section \ref{S: assumptions and results} we give the assumptions on our model and we provide our main results. Section \ref{S: prel} is devoted to state and prove some preliminary results needed for the proofs of the main results. To conclude, in Section \ref{S: proof main} we give the proof of Theorems \ref{th: upper bound}, \ref{th: as norm}, \ref{th: lower bound}, and \ref{th: lower d=2}, where our main results are gathered.

{\modch Throughout all the paper $c$ and $\lambda$ are constants that may change from line to line. Their dependence on $T$ or other fixed constants will be implied from the statements.}

\section{Model assumption and main results}{\label{S: assumptions and results}}
We consider the following stochastic differential equation with jumps
\begin{equation} \label{eq: model}
X_t= X_0 + \int_0^t b( X_s)ds + \int_0^t a(X_s)dB_s + \int_0^t \int_{\mathbb{R}^d_0}
\gamma(X_{s^-})z (\nu(ds,dz)-F(z) dz ds),
\end{equation}
where $t \geq 0$, $d \in \{1,2\}$, $\mathbb{R}^d_0=\mathbb{R}^d \backslash \left \{0 \right \}$, the initial condition $X_0$ is a $\R^d$-valued random variable, the coefficients 
$b:\R^d \rightarrow \R^d$, $a:\R^d \rightarrow \R^d\otimes \R^d$ and $\gamma:\R^d \rightarrow \R^d\otimes \R^d$ are measurable functions,
$(B_t)_{t \ge 0}$ is a $d$-dimensional  Brownian motion, and $\nu$ is a Poisson random measure on $\mathbb{R}_+ \times \mathbb{R}^d$ associated to a L\'evy process $(L_t)_{t \geq 0}$ with L\'evy density function $F$.
All sources of randomness are mutually  independent. 

We consider the following assumptions on  the coefficients and  on the  L\'evy density $F$:
\begin{enumerate} 
\item[{\bf A1}] The functions $b$, $\gamma$ and $aa^T$ are globally Lipschitz and bounded.
Moreover, $\inf_{x \in {\modch \mathbb{R}^d}}aa^T(x) \geq c$Id, for some constant $c>0$, where Id denotes the  $d \times d$ identity matrix and 
$\inf_{x \in { \modch \mathbb{R}^d}} \text{det} (\gamma(x))> 0$.

\item[{\bf A2}] 
 $\langle x, b(x) \rangle\le -c_1|x|+c_2$, for all $|x| \ge \rho$, for some $\rho,c_1,c_2 > 0$.

\item[{\bf A3}] Supp$(F)=\mathbb{R}^d_0$ and  for all 
$z \in \mathbb{R}^d_0$, $F(z) \le \frac{c_3}{|z|^{d + \alpha}}$, for some $\alpha \in (0,2), c_3>0$. 

\item[{\bf A4}] There exist $\epsilon_0 > 0$ and $c_4 > 0$ such that $\int_{\mathbb{R}^d_0}|z|^2 e^{\epsilon_0 |z|} F(z) dz \le c_4$. 

\item[{\bf A5}] If $\alpha =1$, $\int_{r <| z |< R} z F(z) dz =0$, for any $0 < r < R < \infty$.
\end{enumerate}

Assumption {\bf A1} ensures that equation (\ref{eq: model}) admits a unique c\`adl\`ag adapted solution $X=(X_t)_{t \geq 0}$ satisfying the strong Markov property, see e.g. \cite{Applebaum}. Moreover, it is shown in \cite[Lemma 2]{Chapitre 4} that if we further assume Assumptions {\bf A2-A4}, then the process $X$ is exponentially ergodic and exponentially $\beta$-mixing. Therefore {\modch the process is stationary and, in particular,} it has a unique invariant distribution $\pi$, which we assume it has a density $\mu$ with respect to the  Lebesgue measure. 
Finally, Assumption {\bf A5} ensures the existence of the transition density of $X$ denoted by $p_{t}(x,y)$ which satisfies the following upper bound (see \cite[Lemma 1]{Chapitre 4}): for all $T \geq 0$, there exist {\modch $c>0$ and $\lambda>0$} such that for any $t \in [0,T]$ and $x,y \in \mathbb{R}^d$,
\begin{equation} \label{pt}
p_t(x,y) \leq c \left(t^{-d/2} e^{-\lambda \frac{\vert y-x\vert^2}{t}}
+\frac{t}{(t^{1/2}+\vert y-x\vert)^{d+\alpha}} \right).
\end{equation} 

We assume that the process is observed continuously $X=(X_t)_{t \in  [0,T]}$ in a time interval $[0,T]$ such that $T$ tends to $\infty$. 
In the 
paper \cite{Chapitre 4} cited above, the nonparametric estimation of $\mu$ is studied via the  kernel estimator which is defined as follows.
We assume that  $\mu$ belongs to the  H\"older space $\mathcal{H}_d(\beta, \mathcal{L})$ where  $\beta=(\beta_1,\ldots,\beta_d)$, $\beta_i \ge 1$ and $\mathcal{L}=(\mathcal{L}_1,\ldots,\mathcal{L}_d)$, $\mathcal{L}_i>0$, which means that for all $i \in \{1,\ldots,d\}$, $k = 0,1,\ldots, \lfloor \beta_i \rfloor$ and $t \in \mathbb{R}$,
\begin{equation*} 
\left \| D^{(k)}_i \mu \right \|_\infty \le \mathcal{L}  \quad \text{ and } \quad \left \| D_i^{(\lfloor \beta_i \rfloor)}\mu(. + te_i) - D_i^{(\lfloor \beta_i \rfloor)}\mu(.) \right \|_\infty \le \mathcal{L}_i |t|^{\beta_i - \lfloor \beta_i \rfloor},
\end{equation*}
where $D^{(k)}_i$ denotes the $k$th order partial derivative of $\mu$ w.r.t the $i$th  component, 
$\lfloor \beta_i \rfloor$ is the integer part of $\beta_i$, and $e_1,\ldots,e_d$ is the canonical  basis of $\R^d$.
 {\modch That is, all the partial derivatives of $\mu$ up to order $\lfloor \beta \rfloor$ are bounded and the 
 $\lfloor \beta \rfloor$th partial derivative is H\"older continuous of order $\beta-\lfloor \beta \rfloor$ in any direction. We recall that it is natural in our context to assume that the invariant density belongs to a H\"older class as above. In fact, the proof of the bias bound (\ref{bv}) stated below gives a direct application of this assumption, see the proof of Proposition 2 in \cite{Chapitre 4}. Other examples of nonparametric estimation over H\"older classes can be found in \cite{Hop_Hof},  \cite{Jud_Nem}, \cite{Lep_Spo}, and \cite{Tsy}}. 

We set
\begin{equation*}
\hat{\mu}_{h,T}(x) = \frac{1}{T \prod_{i=1}^d h_i} \int_0^T \prod_{i=1}^d K\left(\frac{x_i - X^i_t}{h_i}\right) dt = : \frac{1}{T} \int_0^T \mathbb{K}_h(x - X_t) dt, 
\label{eq: def estimator}
\end{equation*}
where $x=(x_1,\ldots,x_d) \in \R^d$, $h=(h_1,\ldots,h_d)$ is a bandwidth and  $K: \mathbb{R} \rightarrow \mathbb{R}$ is a  kernel function satisfying 
$$\int_\mathbb{R} K(x) dx = 1, \quad \left \| K \right \|_\infty < \infty, \quad \mbox{supp}(K) \subset [-1, 1], \quad \int_\mathbb{R} K(x) x^i dx = 0,$$
for all $i \in \left \{ 0,\ldots, M \right \}$ with $M \geq \max_i \beta_i$. 

\vskip 12pt
We first consider equation (\ref{eq: model}) with $d=1$ and show that the kernel estimator reaches the optimal rate $T^{- 1}$, as it is for the stochastic differential equation (\ref{eq: model}) without jumps. For this, we need the following additional assumption on {\modch $a$}.
\begin{enumerate}
\item[{\bf A6}] {\modch If $d=1$, $a^2 \in C^2_b(\R)$, that is, $a^2$ is twice continuously differentiable with bounded first and second derivatives.} 
\end{enumerate}
{\modch Assumption \textbf{A6} is needed in order to show the results gathered in Theorems \ref{th: upper bound} and \ref{th: as norm}, while for the other results only assumptions \textbf{A1} - \textbf{A5} will be required.}
\begin{theorem} \label{th: upper bound}
Let $X$ be the solution to \textnormal{(\ref{eq: model})} on $[0,T]$ with $d=1$.
Suppose that Assumptions {\bf A1}-{\bf A6} hold and $\mu \in \mathcal{H}_1(\beta, \mathcal{L})$, {\modch with $\beta \ge 1$}. Then  there exists $c>0$ such that for all $T>0$, $h \leq 1$, and $x \in \R$,
\begin{equation} \label{pre}
\mathbb{E} [|\hat{\mu}_{h,T}(x) - \mu(x)|^2] \leq ce^{\epsilon |x|}( h^{2 \beta} + \frac{1}{T}),
\end{equation}
where $0<\epsilon \le \min(\frac{\epsilon_0}{\left \| \gamma \right \|_\infty}, \epsilon_0)$, with $\epsilon_0 > 0$ as in Assumption {\bf A4}
In particular, choosing $h(T) = \frac{1}{\sqrt{T}}$, we conclude that
{\modch for $T \geq 1$},
\begin{equation} \label{1t}
\mathbb{E} [|\hat{\mu}_{h,T}(x) - \mu(x)|^2] \leq  \frac{c e^{\epsilon |x|}}{T}.
\end{equation}
\end{theorem}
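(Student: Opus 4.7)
The plan is to split the mean squared error into its bias and variance components,
$$
\mathbb{E}[|\hat{\mu}_{h,T}(x) - \mu(x)|^2] = \bigl(\mathbb{E}[\hat{\mu}_{h,T}(x)] - \mu(x)\bigr)^2 + \mathrm{Var}(\hat{\mu}_{h,T}(x)),
$$
and to bound each term separately, with the real substance concentrated on the variance side. By stationarity $\mathbb{E}[\hat{\mu}_{h,T}(x)] = \int_{\R} K(u)\mu(x-hu)\,du$, and the Hölder regularity $\mu\in\mathcal{H}_1(\beta,\mathcal{L})$ combined with the vanishing moments of $K$ up to order $M\ge\beta$ yields the standard Taylor estimate $|\mathbb{E}[\hat{\mu}_{h,T}(x)] - \mu(x)| \le ch^{\beta}$ in exact analogy with Proposition~2 of \cite{Chapitre 4}. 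This produces the $h^{2\beta}$ contribution of (\ref{pre}) and imports no new difficulty from the jumps.

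The body of the proof is the uniform-in-$h$ bound $\mathrm{Var}(\hat{\mu}_{h,T}(x)) \le c e^{\epsilon|x|}/T$. Using stationarity and symmetry one first reduces
$$
\mathrm{Var}(\hat{\mu}_{h,T}(x)) \le \frac{2}{T}\int_0^{\infty}\bigl|\mathrm{Cov}\bigl(\mathbb{K}_h(x-X_0),\mathbb{K}_h(x-X_u)\bigr)\bigr|\,du,
$$
and rewrites the covariance as $\iint \mathbb{K}_h(x-y_1)\mathbb{K}_h(x-y_2) g_u(y_1,y_2)\,dy_1 dy_2$, where $g_u(y_1,y_2) = \mu(y_1)p_u(y_1,y_2)-\mu(y_1)\mu(y_2)$. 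The strategy is then to invoke the Castellana--Leadbetter-type conditions \textbf{WCL1} (local irregularity for small $u$) and \textbf{WCL2} (asymptotic independence for large $u$), which together furnish the pointwise bound $\int_0^{\infty}|g_u(y_1,y_2)|\,du \le k(y_1)$ uniformly in $y_2$, with $k(y_1)\le c e^{\epsilon|y_1|}$; these two conditions are exactly the object of the preliminary proposition of Section~\ref{S: prel}. Fubini together with $\int|\mathbb{K}_h(x-y)|\,dy = \|K\|_{L^1}$ (independent of $h$) then gives $\int_0^{\infty}|\mathrm{Cov}(\cdots)|\,du \le c\,k(x) \le c e^{\epsilon|x|}$, from which (\ref{pre}) follows on combining with the bias bound; the choice $h = T^{-1/2}$ (admissible since $\beta\ge 1$) then yields (\ref{1t}).

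The main technical obstacle is the verification of \textbf{WCL1}, i.e., the integrability in $u$ near $u=0$ of $\sup_{y_2}|g_u(y_1,y_2)|$ with an exponentially decaying weight in $y_1$. The transition density upper bound (\ref{pt}) in $d=1$ produces a leading Gaussian singularity of order $u^{-1/2}$, which is integrable on $(0,1)$; this purely dimensional fact is precisely what separates the $T^{-1}$ rate in $d=1$ from the $(\log T)/T$ rate in $d=2$, where the same scheme picks up a logarithmic factor. The jump part of (\ref{pt}), of order $u/(u^{1/2}+|y_1-y_2|)^{1+\alpha}$, must also be controlled uniformly in $y_2$, and its short-time treatment calls for a refinement of (\ref{pt}) near the diagonal: this is where Assumption \textbf{A6} enters, allowing an Itô/Malliavin-type expansion of the continuous part of the density, while the exponential weight $e^{-\epsilon|y_1|}$ is extracted from the invariant density itself (which decays exponentially by \textbf{A2} and \textbf{A4}). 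The verification of \textbf{WCL2}, by contrast, follows almost directly from the exponential $\beta$-mixing already established in \cite{Chapitre 4}, so the whole difficulty is concentrated in the small-time analysis.
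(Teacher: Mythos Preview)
Your overall architecture is exactly the paper's: bias--variance split, reduction of the variance to $\int_0^\infty |g_u|\,du$ via stationarity, and then invocation of \textbf{WCL1}+\textbf{WCL2} from Proposition~\ref{prop: WCL satisfied}. But you have inverted the roles of the two conditions and, as a consequence, misplaced the real obstacle.

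In the paper \textbf{WCL1} is trivial: the crude estimate $p_t(x,y)\le ct^{-1/2}$ for $t\le 2$, read straight off (\ref{pt}), is integrable on $(0,u_0)$, and one takes $k_1(x)=\mu(x)$, which is bounded. No refinement of (\ref{pt}), no Assumption~\textbf{A6}, and no exponential weight are used here. The hard step is \textbf{WCL2}, and your claim that it ``follows almost directly from the exponential $\beta$-mixing'' is a genuine gap. Exponential ergodicity gives the total-variation bound $\|P_t(x,\cdot)-\pi\|_{\mathrm{TV}}\le(1+f^*(x))e^{-\rho t}$, but \textbf{WCL2} requires control of the \emph{pointwise} density difference $|p_t(x,y)-\mu(y)|$ uniformly in $y$, which a TV bound alone does not deliver. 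The paper closes this gap by Fourier inversion, writing $2\pi(p_t(x,y)-\mu(y))=\int_{\R} e^{-i\xi y}(\varphi_x(\xi,t)-\varphi(\xi))\,d\xi$ and interpolating between the sup-norm bound on $\varphi_x-\varphi$ (controlled by the TV norm) and the decay $|\varphi_x(\xi,t)|,|\varphi(\xi)|\le c(1+|\xi|)^{-2}$. Establishing that $|\xi|^{-2}$ decay requires two integrations by parts in $y$, hence the bound on $\partial_y^2 Z_t(x,y)$ from Lemma~\ref{lemma: bound second derivatives p}; \emph{this} is where Assumption~\textbf{A6} actually enters.

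A second consequence of the inversion: the factor $e^{\epsilon|x|}$ in (\ref{pre}) is a \emph{growing} weight, not a decaying one extracted from $\mu$. It arises because the Lyapunov function $f^*(x)\sim e^{\epsilon|x|}$ in the ergodicity bound survives into $k_2(x)=c\,\mu(x)(1+f^*(x))$ and then into the kernel integral $\int|\mathbb{K}_h(x-y)|k_2(y)\,dy$; see Remark~\ref{rem}. Your sketch attributes this factor to the small-time analysis and to the decay of $\mu$, which is not where it comes from.
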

{\modch We observe that both the bandwidth and the upper bound do not depend on the unknown smoothness of the invariant density $\beta$, so there is no need to propose a data driven bandwidth adaptive selection procedure as in the case $d>2$ (see \cite{Chapitre 4}).} 

Theorem \ref{th: upper bound} improves the upper bound  obtained in  \cite{Chapitre 4} which was of the form $\frac{(\log  T)^{(2-\frac{1+\alpha}{2}) \vee 1}}{T}$. The price to pay is that the constant in the upper bound depends on $x$ (see Remark \ref{rem} below). However, we are able to find a convergence rate which is optimal, as we will see in Theorem \ref{th: lower bound}. As in \cite{Chapitre 4}, we will use the bias-variance decomposition  (see \cite[Proposition 1]{Decomp})
\begin{equation} \label{bv}\begin{split}
\mathbb{E} [|\hat{\mu}_{h,T}(x) - \mu (x)|^2] &\leq {\modch \vert\mathbb{E} [\hat{\mu}_{h,T}(x)] - \mu (x)|^2
+\mathbb{E} [|\hat{\mu}_{h,T}(x) - \mathbb{E}[\hat{\mu}_{h,T}(x)]|^2]} \\
&\leq c\left( h^{2\beta} + T^{- 2}  \text{Var} \left( \int_0^T \mathbb{K}(x - X_t) dt\right)\right),
\end{split}
\end{equation}
{\modch for some constant $c>0$. For the proof of the bias bound $c h^{2\beta}$ in the same setting of this paper see the proof of Proposition 2 in \cite{Chapitre 4}.}

Then in \cite{Chapitre 4} bounds on the transition semigroup and on the transition density (see (\ref{pt}) above) give an upper bound for the variance depending on the bandwidth.
Here, we use a similar approach as in \cite{Cas_Lea} and \cite{Comte_Mer} to obtain a bandwidth-free rate for the variance of smoothing density estimators (which include the kernel estimator). 
 For Markov diffusions, the sufficient conditions can be decomposed into a local irregularity condition {\bf WCL1} plus an asymptotic independence condition {\bf WCL2}.
There exist two positive integrable functions $k_1$ and $k_2$ (defined on $\mathbb{R}$) and $u_0>0$ such that
\begin{align*}
&\mbox{\bf WCL1: } \sup_{y \in \mathbb{R}} \int_0^{u_0} |g_u(x,y)|\, du < k_1(x), \qquad \text{for all } x \in \mathbb{R},\\
&\mbox{\bf WCL2: } \sup_{y \in \mathbb{R}} \int_{u_0}^\infty  |g_u(x,y)| \, du < k_2(x), \qquad \text{for all } x \in \mathbb{R}
\end{align*}
where $g_u(x,y):=\mu(x) p_u(x,y) - \mu(x) \mu(y)$.
In order to show these conditions, {\modch some further  bounds on the transition density $p_t(x,y)$ involving partial derivatives are needed} (see Lemma \ref{lemma: bound second derivatives p} below), for which the  additional condition {\bf A6} is required.

\begin{remark} \label{rem}
The term $e^{\epsilon \vert  x \vert}$ that appears in the bounds \textnormal{(\ref{pre})} and \textnormal{(\ref{1t})} comes from the fact that we are able to show condition {\bf  WCL2} with $k_2(x)=\mu(x)(1+f^{\ast}(x))$, where $f^{\ast}$ is the  Lyapunov function constructed in \textnormal{\cite{Chapitre 4}}, defined as a $C^{\infty}$ approximation of $e^{\epsilon \vert  x \vert}$ (see the proof of Proposition \ref{prop: WCL satisfied}). We know that $\int_{\R} \mu(x) f^{\ast}(x) dx <\infty$, as shown in \textnormal{\cite{Masuda}}, but this is not sufficient as it was in \textnormal{\cite{Comte_Mer}} in order to bound the variance term in \textnormal{(\ref{bv})} since here  we are dealing with the  kernel estimator. In order to remove the term $e^{\epsilon \vert  x \vert}$ an additional assumption would be  needed that ensures that $\sup_{x \in \R} \mu(x) f^{\ast}(x)<\infty$.
\end{remark}

As shown in \cite{As norm}, conditions {\bf WLC1} and {\bf WLC2} are also useful to show the asymptotic normality of the kernel density estimator, as proved in the  next theorem.
\begin{theorem} \label{th: as norm}
Let $X$ be the solution to \textnormal{(\ref{eq: model})} on $[0,T]$ with $d=1$.
Suppose that Assumptions {\bf A1}-{\bf A6} hold and $\mu \in \mathcal{H}_1(\beta, \mathcal{L})$, {\modch with $\beta \geq  1$. Consider the bandwidth $h(T)= (\frac{1}{T})^{\frac{1}{2}- \epsilon}$, where $\epsilon \in (0, \frac12)$.} Then,
for any collection $(x_i)_{1 \le i \le m}$ of distinct real numbers
\begin{equation} \label{eq: as norm 1}
\sqrt{T}(\hat{\mu}_{h, T} (x_i) - \mathbb{E}[\hat{\mu}_{h, T} (x_i)], \, 1 \le i \le m) \xrightarrow{\mathcal{D}} N^{(m)} (0, \Sigma^{(m)}) \text{ as } T \rightarrow \infty,
\end{equation}
where
$$\Sigma^{(m)} := (\sigma(x_i, x_j))_{1 \le i, j \le m}, \qquad \sigma(x_i, x_j) := 2 \int_0^{\infty} g_u(x_i, x_j) du.$$
\end{theorem}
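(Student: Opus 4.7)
By the Cram\'er--Wold device, the joint convergence \eqref{eq: as norm 1} reduces to one-dimensional asymptotic normality of every linear combination
$$Y_T \;:=\; \sqrt{T}\sum_{i=1}^m \alpha_i\bigl(\hat{\mu}_{h,T}(x_i)-\mathbb{E}[\hat{\mu}_{h,T}(x_i)]\bigr) \;=\; \frac{1}{\sqrt{T}}\int_0^T \bigl(\Phi_h(X_t)-\mathbb{E}[\Phi_h(X_t)]\bigr)\,dt,$$
with $\Phi_h(y):=\sum_{i=1}^m \alpha_i\,\mathbb{K}_h(x_i-y)$. The strategy is to invoke the CLT of \cite{As norm} for stationary continuous-time processes under conditions of \textbf{WCL} type. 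Since our $X$ is exponentially $\beta$-mixing by \cite[Lemma 2]{Chapitre 4}, and conditions \textbf{WCL1}--\textbf{WCL2} are established in Proposition \ref{prop: WCL satisfied}, the main work is to identify the limiting variance; the CLT itself then follows by the standard big-block/small-block scheme.

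\textbf{Variance identification.} By stationarity,
$$\mathrm{Var}(Y_T)\;=\;2\int_0^T\!\!\Bigl(1-\tfrac{u}{T}\Bigr)\,\mathrm{Cov}\bigl(\Phi_h(X_0),\,\Phi_h(X_u)\bigr)\,du,$$
and expanding yields
$$\mathrm{Cov}(\Phi_h(X_0),\Phi_h(X_u)) \;=\; \sum_{i,j}\alpha_i\alpha_j\!\iint \mathbb{K}_h(x_i-y)\,\mathbb{K}_h(x_j-z)\,g_u(y,z)\,dy\,dz.$$
For each fixed $u>0$, continuity of $g_u$ combined with the kernel cancellation properties gives the pointwise limit $\sum_{i,j}\alpha_i\alpha_j\,g_u(x_i,x_j)$ as $h\to 0$. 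I would then pass the limit under the $du$-integral by splitting at the threshold $u_0$ of Proposition \ref{prop: WCL satisfied}: on $(u_0,\infty)$ condition \textbf{WCL2} produces the $h$-uniform dominant $\sum_{i,j}|\alpha_i\alpha_j|\,k_2(x_i)$ after integrating the $\mathbb{K}_h$ factors against $1$ (Fubini applied to $|g_u|$); on $(0,u_0)$ condition \textbf{WCL1} plays the same role, controlling the small-time behaviour in a way that would be inaccessible from the heat kernel estimate \eqref{pt} alone. Dominated convergence then yields
$$\mathrm{Var}(Y_T)\;\longrightarrow\;2\sum_{i,j}\alpha_i\alpha_j\int_0^\infty g_u(x_i,x_j)\,du \;=\;\sum_{i,j}\alpha_i\alpha_j\,\sigma(x_i,x_j).$$

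\textbf{The CLT and main obstacle.} Once the variance is pinned down, asymptotic normality of $Y_T$ proceeds by the classical Bernstein big-block/small-block argument adapted to continuous-time exponentially $\beta$-mixing processes: decompose $[0,T]$ into alternating blocks of sizes $p_T$ and $q_T$ with $p_T\to\infty$, $q_T/p_T\to 0$ and $(T/p_T)\,\beta(q_T)\to 0$; use Berbee's coupling to replace the big blocks by independent copies up to a negligible total-variation error; the small blocks contribute $O(Tq_T/p_T)$ to the variance and hence vanish; Lindeberg/Lyapunov for the independent copies follows from a moment bound of order $\|\Phi_h\|_\infty \lesssim h^{-1}$. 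The bandwidth $h(T)=T^{-(1/2-\varepsilon)}$ is calibrated precisely so that $h^{-1}=T^{1/2-\varepsilon}$ stays small enough relative to $T$ for the Lyapunov ratio to tend to $0$ with a suitable choice of $p_T = T^{\kappa}$, $\kappa<\varepsilon$. The decisive step---and the only delicate one---is the dominated-convergence argument in the variance identification; this is where conditions \textbf{WCL1}--\textbf{WCL2} enter essentially, and matches the obstacle flagged in the introduction.
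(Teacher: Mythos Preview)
Your plan is correct and hinges on the same decisive step as the paper: dominated convergence in the variance identification, justified by the bounds of Proposition~\ref{prop: WCL satisfied}. One point of precision: the dominant you actually need is the \emph{pointwise-in-$u$} estimate
\[
\sup_{y}|g_u(x,y)|\le c\bigl(u^{-1/2}{\bf 1}_{\{u\le 2\}}+\mu(x)(1+f^\ast(x))e^{-\rho u}{\bf 1}_{\{u>2\}}\bigr)
\]
established \emph{inside} the proof of that proposition, not the integrated conditions \textbf{WCL1}--\textbf{WCL2} as stated; the latter only control $\sup_z\int_0^\infty|g_u(y,z)|\,du$ and do not directly furnish an $L^1(du\,dw_1\,dw_2)$ majorant after the change of variables $z_l=x_l-h w_l$.

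The execution differs from the paper only in packaging. Instead of Cram\'er--Wold followed by a hand-built Bernstein big-block/small-block scheme with Berbee coupling, the paper discretizes $[0,T]$ into $n=\lfloor T\rfloor$ blocks of length $\Delta\in[1,2)$, forms the triangular array
\[
Y_{n,i}^{(r)}=\Delta^{-1/2}\int_{t_{i-1}}^{t_i}\bigl(\mathbb{K}_h(x_r-X_u)-\mathbb{E}[\mathbb{K}_h(x_r-X_u)]\bigr)\,du,
\]
and applies the ready-made multivariate CLT of Bosq--Merlev\`ede--Peligrad (Theorem~\ref{th: teorema 1.1}) directly, so the vector case is handled without Cram\'er--Wold. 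Condition~(iii) there is exactly your variance identification, and condition~(v), namely $M_n\le c\,n^{f(\gamma_0)}$ with $f(\gamma_0)=\gamma_0^2/\bigl((3\gamma_0-1)(2\gamma_0-1)\bigr)\in(1/6,1/2)$, plays the role of your Lyapunov ratio and forces the same bandwidth constraint $h(T)^{-1}\le c\,n^{1/2-\epsilon}$. Your route is more self-contained; the paper's is shorter because the blocking, coupling and moment verification are absorbed into the cited theorem.
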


{\modch Observe that using the choice of $h(T)= (\frac{1}{T})^{\frac{1}{2}- \epsilon}$, with $\epsilon>0$ in the bias bound (\ref{bv}),
we get that for any $x \in \R$ and $T \geq 1$,
$$
\sqrt{T} \vert \mathbb{E} [\hat{\mu}_{h,T}(x) ] -\mu(x) \vert \leq c T^{-\frac12(\beta-1-2\beta \epsilon)}.
$$
Therefore, choosing $\beta>1$ and $\epsilon<\frac{\beta-1}{2\beta}$ and applying Theorem \ref{th: as norm}, we conclude that
as $T \rightarrow \infty$
\begin{equation*} 
\sqrt{T}(\hat{\mu}_{h, T} (x_i) - \mu (x_i), \, 1 \le i \le m) \xrightarrow{\mathcal{D}} N^{(m)} (0, \Sigma^{(m)}).
\end{equation*}}

We are also interested in obtaining lower bounds in dimension $d \in \{1,2\}$.
{\modch For the computations of the lower bounds we consider the particular case of equation (\ref{eq: model}) given by}
\begin{equation}\label{eq: model lower bound}
X_t= X_0 + \int_0^t b( X_s)ds + aB_t + \int_0^t \int_{\mathbb{R}^d_0}
\gamma z (\nu(ds,dz)-F(z) dz ds),
\end{equation}
where $a$ and $\gamma$ are $d \times d$ constant matrices and the rest of terms are as in equation (\ref{eq: model}).

{\modch We next introduce the following set of drift functions of equation (\ref{eq: model lower bound}).}
We say that a bounded and Lipschitz function  $b: \R^d \rightarrow \R^d$  belongs to $\Sigma (\beta, \mathcal{L})$ if the unique invariant density $\mu_b$ of  the solution $X=(X_t)_{ t \geq 0}$ to (\ref{eq: model lower bound}) belongs to $\mathcal{H}_d ( \beta, {\modch 2} \mathcal{L})$ for some $\beta, \mathcal{L} \in \R^d$, $\beta_i\geq 1$, $\mathcal{L}_i>0$. {\modch A detailed description of the set $\Sigma (\beta, \mathcal{L})$ will be given in Section 4.3, where two explicit examples of drift coefficients $b_0$ and $b_1$ belonging to $\Sigma (\beta, \mathcal{L})$ will be introduced.}

We denote by $\mathbb{P}_b^{(T)}$ and $\mathbb{E}_b^{(T)}$ the law and expectation of the solution $(X_t)_{t \in [0, T]}$.
We define the minimax risk at a point $x \in \mathbb{R}^d$ by
\begin{equation}\label{eq: def risk}
\mathcal{R}^x_T (\beta, \mathcal{L}) := \inf_{\tilde{\mu}_T} \mathcal{R} (\tilde{\mu}_T (x)):=\inf_{\tilde{\mu}_T}\sup_{b \in \Sigma (\beta, \mathcal{L})} \mathbb{E}_b^{(T)}[(\tilde{\mu}_T (x) - \mu_b (x))^2],
\end{equation}
where the infimum is taken on all possible estimators of the invariant density.

The following lower bounds hold true. 
\begin{theorem} \label{th: lower bound}
Let $X$ be the solution to \textnormal{(\ref{eq: model lower bound})} on $[0,T]$ with $d=1$.
{\modch Suppose that Assumptions {\bf A1}-{\bf A5} hold, that $\int_{\R}F(z) dz < \infty$ and that $\mu_b \in \mathcal{H}_1 (\beta, \mathcal{L})$, with $\beta \geq 1$.}
Then, there exists $T_0 > 0$ and $c>0$ such that, for all $T \ge T_0$,
$$\inf_{x \in \R}\mathcal{R}^x_T (\beta, \mathcal{L})\geq \frac{c}{T}.$$
\end{theorem}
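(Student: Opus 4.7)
The strategy is the classical two-hypothesis reduction (Theorem 2.2 in \cite{Ts}): build two drifts $b_0, b_1 \in \Sigma(\beta,\mathcal{L})$ whose invariant densities $\mu_0:=\mu_{b_0}$ and $\mu_1:=\mu_{b_1}$ differ at the target point $x$ by an amount of order $T^{-1/2}$, while the Kullback--Leibler divergence between the associated path-laws $\mathbb{P}_{b_0}^{(T)}$ and $\mathbb{P}_{b_1}^{(T)}$ remains bounded as $T\to\infty$. The reduction then forces $\mathcal{R}^x_T(\beta,\mathcal{L})\geq c/T$.

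\textbf{Construction of the hypotheses.} I would fix a smooth strictly positive reference density $\mu_0\in\mathcal{H}_1(\beta,\mathcal{L})$ compatible with the decay imposed by \textbf{A2} (e.g.\ a rescaled Gaussian), and set
$$\mu_1(y) = \mu_0(y) + \frac{\eta}{\sqrt{T}}\,\phi(y-x),$$
where $\phi\in C_c^\infty(\R)$ is a fixed bump with $\phi(0)=1$ and $\int_\R \phi = 0$ (so that $\mu_1$ remains a probability density), and $\eta>0$ is small enough to ensure $\mu_1>0$ and $\mu_1\in\mathcal{H}_1(\beta,2\mathcal{L})$. Then $|\mu_0(x)-\mu_1(x)|=\eta/\sqrt{T}$, uniformly in $x$.

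\textbf{Recovery of the drift and KL control.} Because $\int_{\R} F<\infty$, the compensator can be absorbed into an effective constant shift of the drift, so that the stationary Fokker--Planck equation $A^{\ast}\mu_i=0$ reads
$$-(\tilde{b}_i\mu_i)'(y) + \frac{a^2}{2}\mu_i''(y) + \int_{\R_0}\bigl[\mu_i(y-\gamma z) - \mu_i(y)\bigr] F(z)\,dz = 0.$$
Integrating once from $-\infty$ to $y$ yields an explicit formula for $b_i$ in terms of $\mu_i$, $\mu_i'$ and a bounded non-local term; using $\mu_i\geq c>0$, \textbf{A4}, and the H\"older regularity of $\mu_i$, one checks that $b_i$ is bounded and globally Lipschitz, that \textbf{A2} holds, and therefore $b_0,b_1\in\Sigma(\beta,\mathcal{L})$. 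Since $a,\gamma$ and $F$ are common to both models, Girsanov's theorem for jump diffusions and stationarity yield
$$KL\bigl(\mathbb{P}_{b_0}^{(T)}\,\bigm\|\,\mathbb{P}_{b_1}^{(T)}\bigr) = \frac{T}{2a^2}\int_\R (b_1-b_0)^2(y)\,\mu_0(y)\,dy.$$
From the explicit formula, $b_1-b_0$ is controlled by $\mu_1-\mu_0=O(T^{-1/2})$ and its derivative, so $\|b_1-b_0\|_\infty\leq C\,T^{-1/2}$ and hence $KL\leq C$. Applying Tsybakov's lemma with $2s=\eta/\sqrt{T}$ gives $\mathcal{R}^x_T(\beta,\mathcal{L})\geq c/T$ for $T\geq T_0$, uniformly in $x$.

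\textbf{Main obstacle.} The delicate point is the drift-recovery step: the integrated non-local term must be shown to be well defined, bounded at $\pm\infty$, and to retain Lipschitz regularity after division by $\mu_i$. The finite-intensity assumption $\int F<\infty$, combined with the exponential moment bound \textbf{A4}, is precisely what allows one to split and estimate $\int_{-\infty}^y \int_{\R_0}[\mu_i(\xi-\gamma z)-\mu_i(\xi)]F(z)\,dz\,d\xi$, a control that is not available in dimension $d\geq 2$ (cf.\ Remark 1 in the introduction) and explains why the two-hypothesis method succeeds only here. A secondary subtlety lies in the KL step: preserving the $T^{-1/2}$ scaling of $b_1-b_0$ uses that the perturbation $v_T=\mu_1-\mu_0$ has amplitude $T^{-1/2}$ while its derivatives stay uniformly bounded in $T$, so the same scaling is inherited by the non-local operator applied to $v_T$.
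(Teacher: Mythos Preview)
Your overall strategy is exactly the paper's: two hypotheses, drift recovery from the stationary Fokker--Planck equation, and Girsanov for the KL bound. But the drift-recovery step, which you correctly flag as the delicate point, is handled incorrectly.

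First, the Gaussian reference density $\mu_0$ does not work. For a Gaussian one has $\mu_0'(y)/\mu_0(y)=-y/\sigma^2$, so the diffusive part of the recovered drift is linear and the resulting $b_0$ is unbounded; hence $b_0\notin\Sigma(\beta,\mathcal{L})$, which by definition requires $b$ bounded and Lipschitz. The paper instead takes $f_0(y)=c_\eta f(\eta|y|)$ with $f$ a $C^\infty$ approximation of $e^{-|x|}$, precisely so that $f_0'/f_0$ stays bounded and a bounded drift satisfying \textbf{A2} can be recovered.

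Second, and relatedly, your key ingredient ``$\mu_i\ge c>0$'' cannot hold on all of $\R$ for an integrable density. The actual work in proving that $b_i$ is bounded and Lipschitz is to control the ratio
\[
\frac{1}{f(y)}\int_{-\infty}^{y}\Bigl(\tfrac{a^2}{2}f''(w)+A_d^{*}f(w)\Bigr)\,dw
\]
as $|y|\to\infty$, where both numerator and denominator vanish. This is the content of Proposition~\ref{prop: b pi proprieta}, which imposes the tail and ratio conditions $\sup_{y<0}\frac{1}{f(y)}\int_{-\infty}^y f(w)\,dw<\infty$, $|f'|\le c f$, $|f''|\le c f$ and $f(y\pm z)\le c\,e^{\epsilon|z|}f(y)$; an exponential-tail density satisfies all of these, a Gaussian does not (conditions~4 and~5 fail). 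Once the reference density is chosen correctly, your perturbation $\mu_1=\mu_0+\eta T^{-1/2}\phi(\cdot-x)$ coincides with the paper's $f_1$ (the bandwidth $H$ there is a fixed constant and $M_T=\sqrt T$), and the KL bound goes through as you indicate.

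A minor correction: the reason the two-hypothesis method fails in $d=2$ is not the non-local term in the drift recovery, which works for any $d$ (see Propositions~2 and~3 of \cite{lowerbound}), but the calibration in Step~3: after optimising the bandwidths one cannot have both the separation $1/M_T^2=(\log T)/T$ and the KL constraint $\sup_T T/M_T^2<\infty$; see Remark~\ref{d12}.
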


\begin{theorem} \label{th: lower d=2}
Let $X$ be the solution to \textnormal{(\ref{eq: model lower bound})} on $[0,T]$ with $d=2$. {\modch Suppose that Assumptions {\bf A1}-{\bf A5} hold, that $\int_{\R^2}F(z) dz < \infty$ and that $\mu_b \in \mathcal{H}_2 (\beta, \mathcal{L})$, with $\beta_i \geq  1$ for $i =1, 2$.}
Assume that for all $i \in  \{1,2\}$ and $j \neq i$, 
\begin{equation} \label{conditiona}
\vert (a  a^T)_{ij} (a a^T)^{-1}_{jj} \vert \leq \frac12.
\end{equation}
Then, there exists $T_0 > 0$ and $c>0$ such that, for $T \ge T_0$,
$$\inf_{\tilde{\mu}_T} \sup_{b \in \Sigma(\beta, \mathcal{L})} \mathbb{E}^{(T)}_b\left[ \sup_{x \in \R^2} (\tilde{\mu}_T(x) - \mu_b(x))^2\right] \geq c \frac{\log T}{T}.$$
\end{theorem}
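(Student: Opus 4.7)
The plan is to use the multiple hypotheses scheme of \cite{Ts} (Theorem 2.5) with $M \asymp T$ competing drifts, since the two-hypothesis reduction used for Theorem \ref{th: lower bound} only yields a $1/T$ lower bound in dimension two (cf. Remark 1).

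\textbf{Step 1: Construction of hypotheses.} I would fix a reference drift $b_0 \in \Sigma(\beta, \mathcal{L})$ with strictly positive smooth invariant density $\mu_0$ on a compact $K$. Then choose a grid of $M \asymp T$ points $(x_k)_{1 \leq k \leq M}$ in the interior of $K$ with mutual distances at least $2 h_T$ for $h_T \asymp T^{-1/2}$, and a fixed smooth compactly supported bump $\phi: \R^2 \to \R$ with $\int \phi = 0$ and $\phi(0)>0$. Define
$$\mu_k := \mu_0 + v_T\, \phi\!\left(\frac{\,\cdot\, - x_k}{h_T}\right), \qquad v_T := c_0 \sqrt{\log T/T},$$
with $c_0$ small enough that $\mu_k \in \mathcal{H}_2(\beta, 2\mathcal{L})$ and $\mu_k > 0$ (so $\mu_k$ is automatically a probability density). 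Recover the drift $b_k$ by inverting the linearized stationary equation: subtracting $A_{b_0}^* \mu_0 = 0$ from $A_{b_k}^* \mu_k = 0$ gives
$$\nabla \cdot\!\big((b_k - b_0)\mu_0\big) = L(\mu_k - \mu_0),$$
where $L$ combines the constant-coefficient diffusive part and the adjoint jump operator (bounded since $\int F < \infty$). Condition (\ref{conditiona}) on the off-diagonal entries of $aa^T$ is what enables a localized solution $(b_k - b_0)$ supported in $B(x_k, h_T)$ with $\|b_k - b_0\|_\infty \leq C v_T/h_T$ and the H\"older regularity required by $\Sigma(\beta, \mathcal{L})$.

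\textbf{Step 2: KL control and Fano.} Since $a$, $\gamma$, and $F$ are common across hypotheses, only the drift enters the Radon--Nikodym density; starting from stationary initial laws, Girsanov's theorem for jump-diffusions gives
$$\mathrm{KL}(\mathbb{P}_{b_k}^{(T)} \| \mathbb{P}_{b_0}^{(T)}) \leq C T \int_{\R^2} |b_k - b_0|^2 \mu_0\, dx \leq C T \cdot (v_T/h_T)^2 \cdot h_T^2 = C c_0^2 \log T,$$
using the support and amplitude estimates from Step 1. Since $\log M \asymp \log T$, choosing $c_0$ small makes $\frac{1}{M}\sum_k \mathrm{KL} \leq \alpha \log M$ with $\alpha < 1/8$. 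On the other hand the bumps have disjoint supports, so $\|\mu_k - \mu_l\|_\infty \geq v_T \|\phi\|_\infty$ for $k \neq l$, i.e.\ pairwise $L^\infty$ separation of order $v_T$. Applying Theorem 2.5 of \cite{Ts} with the loss $d(\mu, \nu) := \|\mu - \nu\|_\infty$ yields
$$\inf_{\tilde{\mu}_T} \max_{0 \leq k \leq M} \mathbb{P}_{b_k}^{(T)}\!\left(\|\tilde{\mu}_T - \mu_{b_k}\|_\infty \geq \tfrac{1}{2} v_T \|\phi\|_\infty\right) \geq C > 0,$$
which by Markov's inequality implies the squared $L^\infty$ lower bound $c\, \log T / T$ claimed in the statement.

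\textbf{Main obstacle.} The substantive step is the drift inversion in Step 1. Contrary to the one-dimensional case (an antidifferentiation), in $d=2$ the equation $\nabla \cdot ((b_k-b_0)\mu_0) = L(\mu_k-\mu_0)$ is underdetermined (one scalar equation, two unknowns), and the solution must simultaneously be (i) supported in $B(x_k, h_T)$, (ii) of size $\asymp v_T/h_T$, and (iii) sufficiently regular for $b_k \in \Sigma(\beta, \mathcal{L})$. Condition (\ref{conditiona}) on the off-diagonal entries of $aa^T$ is introduced precisely to unlock this construction. Once such a family is in hand, the Girsanov and Fano steps are routine.
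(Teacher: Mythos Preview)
Your overall architecture---multiple bump hypotheses, drift recovery from the prescribed density, Girsanov control of the Kullback--Leibler divergence, and a Fano-type reduction---matches the paper's proof (which invokes Lemma~\ref{lemma: lower dim 2}, the Kullback version of Tsybakov's multiple-hypotheses theorem). However, your parameter calibration contains a genuine gap.

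With $h_T \asymp T^{-1/2}$ and $v_T = c_0\sqrt{\log T/T}$, the $\beta_i$-H\"older seminorm of the bump $v_T\,\phi((\cdot-x_k)/h_T)$ in direction $i$ scales like
\[
v_T\,h_T^{-\beta_i}\;\asymp\;\sqrt{\log T}\;T^{(\beta_i-1)/2},
\]
which diverges for every $\beta_i\ge 1$. So no choice of $c_0$ keeps $\mu_k\in\mathcal{H}_2(\beta,2\mathcal{L})$, and the hypotheses fall outside $\Sigma(\beta,\mathcal{L})$. The paper avoids this by taking a \emph{much larger} bandwidth $H_1=H_2=(\log T/T)^{\alpha}$ with $\alpha\le 1/(2\max_i\beta_i)$, which forces the H\"older constraint \eqref{eq: cond holder dim 2}. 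This yields only $|J_T|\asymp (T/\log T)^{\alpha}$ hypotheses, but since one only needs $\log|J_T|\asymp\log T$ for the Fano step, this is enough; the KL bound (Proposition~\ref{prop: constante per d=2}) still gives $KL\le C v^2\log T$, and one tunes $v$ (not the bandwidth) to get the ratio below $1/8$.

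Two smaller inaccuracies: (i) condition~\eqref{conditiona} is not used to ``localize'' the drift perturbation; in the paper's construction the map $f\mapsto b_f$ is an explicit componentwise formula (see $\tilde I^i[f]$ in the proof of Proposition~\ref{prop: constante per d=2}), and \eqref{conditiona} is used to guarantee that $b_f$ satisfies the drift condition {\bf A2}. (ii) The resulting $b_j-b_0$ is \emph{not} supported in the bump's support $K_T^j$; it is only $O(v_T)$ outside and $O(v_T/H_i)$ inside, and both regions must be accounted for in the $L^2(f_0)$ estimate.
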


{\modch Recall for these two theorems, $a$ and $\gamma$ are $d \times d$ constant matrices. In this case, when $d=1$, Assumption {\bf A1} is equivalent to say that $a\neq 0 $ and $\gamma>0$, while when $d=2$, it is equivalent to say that $\det(a) \neq 0$ and $\det(\gamma)>0$. 
Moreover, hypotheses {\bf A3}-{\bf A5} imply that the unique solution to equation (\ref{eq: model lower bound}) admits a unique invariant measure $\pi_b$, which we assume has a density $\mu_b$ with respect to the Lebesgue measure, as before.}

Comparing these lower bounds  with the upper bound of  Theorem \ref{th: upper bound} for the case  $d=1$ and Proposition 4 in \cite{Chapitre 4} for the two-dimensional case, we conclude that the convergence rate $\{\frac{1}{T},\frac{\log T}{T}\}$ are the best possible for the {\modch estimation} of the invariant density in dimension $d \in \{1,2\}$.

The proof of Theorem \ref{th: lower bound} follows along the same lines as that of Theorem 2 in \cite{lowerbound}, where a lower bound for the {\modch estimation} of the invariant  density for the solution to (\ref{eq: model lower bound}) for $d \geq 3$ is obtained. The proof is based on the two hypotheses  method, explained for example in Section 2.3 of \cite{Ts}. However, this method does not work for the two-dimensional case as explained in  Remark \ref{d12} below. Instead, we use the Kullback's version of the finite number of hypotheses method as  stated in Lemma C.1 of \cite{Strauch}, see Lemma \ref{lemma: lower dim 2} below. Observe that this method gives a slightly weaker lower bound as we get a $\sup_x$ inside the  expectation, while
the method in \cite{lowerbound} provides an $\inf_x$ outside the expectation.

\section{Preliminary results}{\label{S: prel}}

The proof of Theorems \ref{th: upper bound} and \ref{th: as norm} will use the following {\modch bounds on the transition density.
 \begin{lemma}  \label{lemma: bound second derivatives p}
Let $X$ be the solution to \textnormal{(\ref{eq: model})} on $[0,T]$ with $d=1$. Suppose that Assumptions {\bf  A1-A6} hold. 
Then, there exist jointly continuous processes $Z$, $A$ and $B$ on $\R_+\times \R^2$ such that
for all $t\geq 0$ and $x,y \in \R$,
\begin{equation} \label{a0}
p_t(x,y)=Z_t(x,y)+A_t(x,y)+B_t(x,y)
\end{equation}
satisfying that
for all $T>0$, there exist  $c>0$ and $\lambda> 0$ such that for any $x, y \in \mathbb{R}$ and $t \in [0, T]$
 \begin{equation} \label{a1}
\bigg| \frac{\partial^2}{\partial y^2} Z_t(x,y) \bigg|\le c\,  t^{-3/2} e^{-\lambda \frac{|y-x|^2}{t}},
 \end{equation}
 \begin{equation} \label{a3}
 \vert A_t(x,y) \vert \leq c\,  (t^{3/2} (\vert y-x\vert+\sqrt{t})^{-1-\alpha}+e^{-\lambda \frac{|y-z|^2}{t}}),
 \end{equation}
 and
 \begin{equation} \label{a4}
 \vert B_t(x,y) \vert \leq c \, (1+ t^{2-\alpha/2})(\vert y-x\vert+\sqrt{t})^{-1-\alpha}.
 \end{equation}
 \end{lemma}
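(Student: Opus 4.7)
The plan is to revisit the proof of the Aronson-type bound \eqref{pt} established in \cite[Lemma~1]{Chapitre 4} and refine its conditioning argument so as to extract a decomposition in which one piece carries all the smoothness needed to differentiate twice, while the other two pieces absorb the ``rough'' jump contributions without any differentiation. Concretely, I would split the Poisson random measure $\nu$ at a cut-off level $|z| \leq 1$ versus $|z|>1$ (or, alternatively, at scale $|z| \leq \sqrt{t}$), writing $X = X^c + X^j$, where $X^c$ solves the SDE driven by Brownian motion together with the compensated small jumps (which have all exponential moments thanks to \textbf{A4}), and where $X^j$ collects the large jumps whose occurrences form a Poisson process of finite intensity.

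Conditioning on the number $N_t$ of large jumps in $[0,t]$, one has
\[
p_t(x,y) = e^{-\lambda_0 t}\, q_t(x,y) + e^{-\lambda_0 t}\, r^{(1)}_t(x,y) + r^{(\geq 2)}_t(x,y),
\]
where $q_t$ is the density of $X^c_t$ starting from $x$, $r^{(1)}_t$ is the contribution of exactly one large jump, and $r^{(\geq 2)}_t$ that of at least two. I would take $Z_t := e^{-\lambda_0 t}q_t$, $A_t := e^{-\lambda_0 t}r^{(1)}_t$ and $B_t := r^{(\geq 2)}_t$. For the bound \eqref{a1} on $\partial_y^2 Z_t$, the key input is assumption \textbf{A6}: since $a^2$ is $C^2_b$, the density $q_t$ of the small-jump diffusion satisfies Gaussian-type heat-kernel estimates together with bounds on its first and second spatial derivatives. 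This can be obtained either from the classical parabolic PDE theory of Aronson, applied to the (non-local) Kolmogorov equation of $X^c$ and Malliavin integration by parts on the Brownian component (noting that $\inf aa^T \geq c>0$ by \textbf{A1}), or directly from the parametrix representation whose Gaussian proxy is $C^\infty$ in $y$. Two derivatives bring in the factor $t^{-1}$, yielding the $t^{-3/2}$ prefactor.

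For \eqref{a3}, I would represent $r^{(1)}_t$ as an integral of $q_{t-s}(x, \cdot) \star F(\cdot - \gamma(\cdot) z) \star q_s(\cdot,y)$ over the single jump time $s \in [0,t]$, the jump location, and the jump size $z$ with $|z|>1$. The Gaussian convolution together with the tail bound $F(z)\leq c|z|^{-1-\alpha}$ from \textbf{A3}, combined with the boundedness and non-degeneracy of $\gamma$ in \textbf{A1}, produces the mixed bound: the stable-like term $t^{3/2}(|y-x|+\sqrt{t})^{-1-\alpha}$ comes from integrating the jump density against the Gaussian factor, while the exponential term is the standard small-time Gaussian tail that survives when the jump is small compared to $|y-x|$. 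For \eqref{a4}, $r^{(\geq 2)}_t$ carries the probability of at least two large jumps, which is $O(t^2)$; iterating the one-jump estimate twice and using convolution yields the $(1+t^{2-\alpha/2})(|y-x|+\sqrt{t})^{-1-\alpha}$ bound.

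The main obstacle will be the $C^2$ control in \eqref{a1}: the small-jump diffusion $X^c$ is still driven by a non-degenerate Lévy noise with infinite activity when $\alpha \geq 1$, so a naive Aronson-type argument does not directly give second-order spatial regularity of $q_t$. Overcoming this requires either a careful Malliavin calculus argument on the Brownian component (exploiting the uniform ellipticity of $aa^T$ to perform two integration-by-parts, with \textbf{A6} providing the required $C^2$ regularity of the coefficients) or a parametrix expansion where one differentiates only the explicit Gaussian proxy and absorbs the non-local remainder into $A_t$ and $B_t$. The joint continuity of $Z$, $A$, $B$ follows from the continuity of the various kernels in the representation, and the fact that each of them is a bona-fide conditional density makes the decomposition \eqref{a0} automatic.
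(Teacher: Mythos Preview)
Your decomposition is genuinely different from the paper's. The paper does \emph{not} condition on the number of large jumps. Instead it uses Duhamel's formula from \cite{Chen}: $Z_t$ is taken to be the transition density of the \emph{pure continuous diffusion} (equation \eqref{eq: model} with $b=\gamma=0$, i.e.\ no drift and no jumps at all), while $A_t$ and $B_t$ are the first-order Duhamel corrections coming respectively from the drift $b$ and from the full jump operator. With this choice, \eqref{a1} is immediate from Friedman's classical parabolic estimates (Theorem~7 in \cite{friedman}), which apply directly because $Z$ solves a purely local parabolic equation with $a^2\in C^2_b$; no Malliavin calculus and no small-jump analysis is needed. The bounds \eqref{a3} and \eqref{a4} then follow by inserting the known estimate \eqref{pt} for $p_r$ and the first-derivative Gaussian bound for $Z_{t-r}$ into the Duhamel integrals, together with off-the-shelf convolution inequalities from \cite{Chen}.

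Your route, by contrast, puts the small jumps into $Z_t=e^{-\lambda_0 t}q_t$, and the step you yourself flag as ``the main obstacle'' is a real gap: obtaining $|\partial_y^2 q_t|\le c\,t^{-3/2}e^{-\lambda|y-x|^2/t}$ for the density of a diffusion that still carries an infinite-activity (when $\alpha\ge 1$) small-jump component is not covered by Aronson/Friedman, and neither of your suggested fixes is carried out. Malliavin integration by parts on the Brownian component can give smoothness, but recovering the \emph{Gaussian off-diagonal decay} in the presence of jumps is delicate and would require substantial additional work. Your second suggestion, ``a parametrix expansion where one differentiates only the explicit Gaussian proxy and absorbs the non-local remainder into $A_t$ and $B_t$'', is precisely the paper's strategy, but then $Z$ is no longer your $e^{-\lambda_0 t}q_t$ and the whole jump-counting decomposition is abandoned. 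In short: the paper's Duhamel decomposition around the jump-free diffusion sidesteps exactly the difficulty that your conditioning approach creates.
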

 
 \begin{proof}
By Duhamel's formula (1.12) of \cite{Chen},  the transition density of the solution to \textnormal{(\ref{eq: model})} satisfies that for all $t\geq 0$ and $x,y \in \R$,
\begin{equation*} 
p_t(x,y)=Z_t(x,y)+A_t(x,y)+B_t(x,y)
\end{equation*}
where $Z_t(x,y)$ is the transition density of the solution to \textnormal{(\ref{eq: model})} with $b =\gamma=0$, and $A_t$ and $B_t$ are defined as follows
$$
A_t(x,y):=\int_0^t \int_{\R} p_r(x,z) \,  b(z) \frac{\partial}{\partial z} Z_{t-r}(z,y) \, dz \, dr,
$$
and
\begin{equation*} \begin{split}
B_t(x,y)&:=\int_0^t \int_{\R} p_r(x,z) \int_{\R} \big(Z_{t-r}(z+\xi,y)-Z_{t-r}(z,y)\\
&\qquad \qquad \qquad -{\bf 1}_{\vert \xi \vert \leq 1} \, \xi\,  \frac{\partial}{\partial z} Z_{t-r}(z,y) \big) \frac{k(z,\xi)}{\vert \xi \vert^{1+\alpha}} \, d\xi \, dz \, dr,
\end{split}
\end{equation*}
where $k(z,\xi)=\frac{1}{\gamma(z)}\vert \xi \vert^{1+\alpha} F(\frac{\xi}{\gamma(z)})$. This shows the decomposition formula (\ref{a0}).

By (6.1) in Theorem 7 of \cite{friedman}, using the fact that $a^2$ is bounded together with {\bf A6}, we have that for all $T>0$, there exist  $c, \lambda> 0$ such that for all $x, y \in \mathbb{R}$ and $t \in [0, T]$
 \begin{equation*} 
\bigg| \frac{\partial^2}{\partial y^2} Z_t(x,y) \bigg|\le c \,  t^{-3/2} e^{-\lambda \frac{|y-x|^2}{t}},
 \end{equation*}
 (which proves (\ref{a1})) and
 \begin{equation} \label{a2}
\bigg| \frac{\partial}{\partial x} Z_t(x,y) \bigg|\le c \,  t^{-1} e^{-\lambda \frac{|y-x|^2}{t}}.
 \end{equation}
 In particular, using (\ref{a2}) and the fact that $b$ is bounded, we get that
 \begin{equation*}
 \vert A_t(x,y) \vert \leq c \int_0^t \int_{\R} p_r(x,z) (t-r)^{-1} e^{-\lambda\frac{|y-z|^2}{t-r}} \, dz \, dr.
 \end{equation*}
 Moreover, using (\ref{pt}) together with (2.6) and (2.8) of \cite{Chen} with $\gamma_1=-1$ and $\gamma_2=2$, and $\gamma_1=0$ and $\gamma_2=-1$, respectively, we conclude that (\ref{a3}) holds true.
  
 On the other hand, appealing to Corollary 2.4(i) of \cite{Chen}, from hypotheses {\bf A1}, {\bf A3} and {\bf A5}, we get that for all $T>0$, there exists $c> 0$ such that for all $x, y \in \mathbb{R}$ and $t \in [0, T]$,
 \begin{equation*} 
\vert  B_t(x,y) \vert \leq c \int_0^t \int_{\R} p_r(x,z)(\vert y-z \vert +\sqrt{t-r})^{-1-\alpha} \, dz \, dr.
 \end{equation*}
 Finally, 
 using again (\ref{pt}) together with (2.5) and (2.6) of \cite{Chen} with $\gamma_1=0$ and $\gamma_2=2$, and $\gamma_1=0$ and $\gamma_2=0$, respectively, we obtain (\ref{a4}).

  The proof of the Lemma is completed.
\end{proof}}
 
The key point of the proof of Theorem \ref{th: upper bound} consists in showing that conditions {\bf WCL1} and {\bf WCL2} hold true, which is proved in the next proposition.
\begin{proposition} \label{prop: WCL satisfied}
Let $X$ be the solution to \textnormal{(\ref{eq: model})} on $[0,T]$ with $d=1$.
Suppose that Assumptions {\bf A1-A6} hold. Then, conditions {\bf WCL1} and {\bf WCL2} are satisfied.
\end{proposition}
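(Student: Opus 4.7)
The plan is to verify \textbf{WCL1} and \textbf{WCL2} separately, exploiting the factorization $g_u(x,y)=\mu(x)(p_u(x,y)-\mu(y))$ in both cases. Since $\beta\ge 1$ forces $\mu$ to be uniformly bounded by some constant, the analysis reduces to controlling $p_u(x,y)$ (for small $u$) and $p_u(x,y)-\mu(y)$ (for large $u$), uniformly in $y\in\R$.

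For \textbf{WCL1}, a crude bound suffices. Writing $|g_u(x,y)|\le \mu(x)p_u(x,y)+\|\mu\|_\infty\mu(x)$, the second contribution integrates to $u_0\|\mu\|_\infty\mu(x)$. For the first, majorising the Gaussian factor in (\ref{pt}) by $1$ and the polynomial denominator by $u^{1/2}$ yields $p_u(x,y)\le c(u^{-1/2}+u^{(1-\alpha)/2})$ uniformly in $y$. Since $\alpha\in(0,2)$, both terms are integrable near $u=0$, so that $k_1(x)=c\mu(x)$ does the job.

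For \textbf{WCL2}, the main idea is to decouple the $y$-dependence from the exponential decay in $u$ by splitting the time horizon at the fixed threshold $u_0$. Using the invariance of $\mu$ and Chapman--Kolmogorov, for $u\ge u_0$,
$$
p_u(x,y)-\mu(y)=\int_{\R}\bigl(p_{u-u_0}(x,z)-\mu(z)\bigr)\,p_{u_0}(z,y)\,dz.
$$
The short-time kernel $p_{u_0}(\cdot,y)$ is bounded uniformly in $y$ by (\ref{pt}) at the fixed time $u_0$, so the display is bounded by a constant times $\|p_{u-u_0}(x,\cdot)-\mu\|_{TV}$. The exponential ergodicity from \cite[Lemma 2]{Chapitre 4} then supplies $\|p_t(x,\cdot)-\mu\|_{TV}\le c(1+f^{\ast}(x))e^{-\rho t}$, with $f^{\ast}$ the $C^{\infty}$ approximation of $e^{\epsilon|x|}$ of \cite{Chapitre 4}, whose $\mu$-integrability is established in \cite{Masuda}. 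Integrating in $u$ collapses the exponential and yields $k_2(x)=c\mu(x)(1+f^{\ast}(x))$, consistent with the form announced in Remark~\ref{rem}.

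The main obstacle lies in \textbf{WCL2}: the mixing tools at our disposal are formulated in total variation and give no direct handle on $p_u(x,\cdot)-\mu$ at a specific point $y$. The Markov splitting at a fixed intermediate time $u_0$ is precisely the device that converts total-variation decay into the required pointwise bound, by absorbing the $y$-sensitivity through the uniformly bounded short-time transition. I expect the refined derivative estimates of Lemma~\ref{lemma: bound second derivatives p}, which rely on assumption \textbf{A6}, to be used to make this short-time control quantitatively sharp, as the authors indicate in the paragraph preceding Remark~\ref{rem}.
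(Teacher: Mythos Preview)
Your treatment of \textbf{WCL1} coincides with the paper's: both bound $p_u(x,y)$ uniformly in $y$ by $c\,u^{-1/2}$ via (\ref{pt}) and take $k_1(x)=c\,\mu(x)$ with $u_0=2$.

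For \textbf{WCL2}, your route is correct but genuinely different from the paper's, and in fact more elementary. The paper proceeds via Fourier analysis: it proves the decay $|\varphi_x(\xi,t)|,\,|\varphi(\xi)|\le c(1+|\xi|)^{-2}$ of the (conditional) characteristic functions---this is precisely where Lemma~\ref{lemma: bound second derivatives p}, and hence \textbf{A6}, enter, through integration by parts against $\partial_y^2 Z_1(z,y)$---then writes $p_t(x,y)-\mu(y)$ as an inverse Fourier integral and interpolates between the $(1+|\xi|)^{-2}$ decay and $\sup_\xi|\varphi_x(\xi,t)-\varphi(\xi)|\le\|P_t(x,\cdot)-\mu\|_{TV}$ to obtain a pointwise bound of order $\|P_t(x,\cdot)-\mu\|_{TV}^{1/p}$ for some $p>2$. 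Your Chapman--Kolmogorov split at the fixed time $u_0$ achieves the same conversion from total-variation to pointwise control in one line, using only the uniform bound $\sup_{z,y}p_{u_0}(z,y)<\infty$ from (\ref{pt}); Lemma~\ref{lemma: bound second derivatives p} is not invoked, and your argument in fact appears to go through under \textbf{A1}--\textbf{A5} alone. What the paper's Fourier route buys is a slightly cleaner dependence on the mixing rate (no loss of the exponent $1/p$), but for integrability on $[u_0,\infty)$ this is immaterial. Your closing expectation that the derivative estimates of Lemma~\ref{lemma: bound second derivatives p} would be used to sharpen the short-time control is therefore misplaced: in the paper they serve to control the characteristic function at frequency $|\xi|\to\infty$, not to bound $p_{u_0}$ directly.
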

\begin{proof}
We start considering {\bf WCL1}. The density estimate (\ref{pt}) yields 
\begin{equation}
p_t(x,y) \le c t^{- \frac{1}{2}} + \tilde{c} t^{\frac{1 - \alpha}{2}} \le \bar{c} t^{- \frac{1}{2}} \qquad 0 < t \le 2,
\label{eq: bound transition density}
\end{equation}
which combined with $\sup_{y \in \mathbb{R}} \mu(y) < \infty$ gives {\bf WCL1} with $k_1 (x) = \mu(x)$ and $u_0=2$. In order to show {\bf WCL2}, we set $\varphi(\xi) := \mathbb{E}[\exp(i \xi X_t)]$ and $\varphi_x(\xi, t) := \mathbb{E}[\exp(i \xi X_t)| X_0 = x]$ and we claim that there exists  $\hat{c} > 0$ such that for all $\xi \in \mathbb{R}$,
    \begin{equation} 
    |\varphi(\xi)| \le {\modch \hat{c}}(1 + |\xi|)^{-2}.
    \label{eq: cond varphi}
    \end{equation}
    Moreover, there exists $\tilde{c} > 0$,  such that for all $t \geq 2$, $x \in \mathbb{R}$, and $\xi \in \mathbb{R}$,
     \begin{equation}
    |\varphi_x(\xi, t)| \le {\modch \tilde{c}}(1 + |\xi|)^{-2}.
        \label{eq: cond varphi x}
    \end{equation}
Recall from Lemma 2 in \cite{Chapitre 4} and its proof that the process $X$ is exponentially $\beta$-mixing and there exists $\rho>0$ such that for all $x \in \mathbb{R}$ and $t>0$,
\begin{equation}
\left \| P_t(x, \cdot ) - \mu(\cdot) \right \|_{TV} \le (1 + f^*(x))e^{- \rho t},
\label{eq: beta mixing}
\end{equation}
where $(P_t)_{t \in \R}$ is the transition semigroup of our process $X$, $\left \| \cdot \right \|_{TV}$ is the total variation norm and $f^*(x)$ is a Lyapounov function. Specifically, $f^*(x)$ is defined as $e^{\epsilon |x|}$ for $|x| \ge 1$, with $\epsilon \le \min (\frac{\epsilon_0}{\left \| \gamma \right \|_\infty}, \epsilon_0)$ ($\epsilon_0 > 0$ as in Assumption {\bf A4}). In order to avoid any regularity problem in $0$, $f^*$ is introduced as piecewise function. For $|x| < 1$ it is defined as a $C^\infty$ approximation of $e^{\epsilon |x|}$, such that $f^*$ is $C^\infty$ on $\mathbb{R}$. 

We now prove that inequalities \eqref{eq: cond varphi}, \eqref{eq: cond varphi x} and \eqref{eq: beta mixing} imply {\bf WCL2}. 
Using the inverse Fourier transform, we have 
$$2 \pi (p_t(x,y) - \mu(y)) = \int_{\mathbb{R}} \exp(- i \xi y)(\varphi_x(\xi, t)-  \varphi(t)) d\xi.$$
Then, using \eqref{eq: cond varphi} and \eqref{eq: cond varphi x} we get, for $t\ge 2$, 
$$2 \pi \vert p_t(x,y) - \mu(y)\vert \le 2(\tilde{c} + \hat{c})^{\frac{p-1}{p}} (\sup_{\xi \in \mathbb{R}}|\varphi_x(\xi, t)-  \varphi(\xi)|)^{\frac{1}{p}} \int_{\mathbb{R}^+}(1 + \xi)^{- 2 \frac{p-1}{p}} d\xi,$$
where we have used that $1 = \frac{1}{p} + \frac{p - 1}{p}$. We can choose $p > 2$, so that $2 \frac{p-1}{p} > 1$. We get that there exists a finite constant $c$ such that, for all $t \ge 2$ and $x, y \in \R$,
$$\vert g_t(x,y) \vert=\mu(x)|p_t(x,y) - \mu(y)| \le c \mu(x)(\sup_{\xi \in \mathbb{R}}|\varphi_x(\xi, t)-  \varphi(\xi)|)^{\frac{1}{p}},$$
where  we observe that the right hand side is independent of $y$.
By using the fact that 
$$\sup_{\lambda \in \mathbb{R}}|\varphi_x(\lambda, t)-  \varphi(\lambda)|\leq  \left \| P_t(x, \cdot ) - \mu(\cdot) \right \|_{TV}$$
together with \eqref{eq: beta mixing} we obtain that there exist $c>0$ and $\rho>0$ such that for all $x,y\in \R$ and $t \geq  2$,
\begin{align*}
|g_t(x,y)| & \le c \mu(x)(1 + f^*(x)) e^{- \rho t},
\end{align*}
as $f^{\ast}$ is positive,
and so 
$$ \sup_{y \in \mathbb{R}} \int_{2}^{\infty} \vert g_t(x,y) \vert \, dt \le c \mu(x)(1 + f^*(x)) \int_{2}^{\infty}  e^{- \rho t} \,dt, $$
which implies \textbf{WCL2} with $k_2(x) = c \mu(x)(1 + f^*(x))$.

 We are left to show \eqref{eq: cond varphi} and \eqref{eq: cond varphi x}.
We start showing \eqref{eq: cond varphi x}. Using (\ref{a0}) and integrating by parts yields
\begin{equation*} \begin{split}
 |\varphi_x(\xi, t)| &= \bigg|\int_{\mathbb{R}} \exp(i \xi y) p_t(x,y) dy\bigg| \\
 &=\bigg|\int_{\mathbb{R}}  \int_{\mathbb{R}}\exp(i \xi y) p_{t-1}(x,z) p_1(z,y) dy \, dz \bigg|\\ 
  &=\bigg|\int_{\mathbb{R}}  \int_{\mathbb{R}}\exp(i \xi y) p_{t-1}(x,z) \left(Z_1(z,y)+A_1(z,y)+B_1(z,y)\right) dy\,  dz\bigg|\\
 &\leq  |\xi|^{-2} \int_{\mathbb{R}}\int_{\mathbb{R}} p_{t-1}(x,z) \bigg|\frac{\partial^2}{\partial y^2} Z_1(z,y)\bigg| dy \, dz\\
 &\qquad 
 +\int_{\mathbb{R}}  \int_{\mathbb{R}} p_{t-1}(x,z) \vert A_1(z,y)\vert  dy \, dz
 +\int_{\mathbb{R}}  \int_{\mathbb{R}} p_{t-1}(x,z) \vert B_1(z,y)\vert  dy \, dz\\
 &=:|\xi|^{-2}(I_1+I_2+I_3).
 \end{split}
\end{equation*}
Appealing to (\ref{a1}), we obtain that
\begin{equation*} \begin{split}
I_1\le c \int_{\mathbb{R}}\int_{\mathbb{R}}p_{t-1}(x,z) e^{-\lambda|y-z|^2} dy\, dz =c,
\end{split}
\end{equation*}
where $c$ is independent of $t$ and $x$ as $\int_{\mathbb{R}}p_{t-1}(x,z)dz=1$.
Using (\ref{a3}), we get that
\begin{equation*} \begin{split}
I_2 &\leq c\int_{\mathbb{R}}  \int_{\mathbb{R}} p_{t-1}(x,z) (\vert y-z\vert+1)^{-1-\alpha}+e^{-\lambda|y-z|^2})dy \, dz=c,
\end{split}
\end{equation*}
as the $dy$ integral is finite since $\alpha \in (0,2)$.
Similarly, by (\ref{a4}), 
\begin{equation*} \begin{split}
&I_3 \leq c\int_{\mathbb{R}}  \int_{\mathbb{R}} p_{t-1}(x,z) (\vert y-x\vert+1)^{-1-\alpha} dy \, dz \leq c.
\end{split}
\end{equation*}
Thus, we have proved that
$
|\varphi_x(\xi, t)|\leq c \vert \xi \vert^{-2}$.
Since $|\varphi_x(\xi, t)|\leq 1$, this implies
\eqref{eq: cond varphi x}. Similarly,
\begin{equation*} \begin{split}
|\varphi(\xi)| &= \bigg|\int_{\mathbb{R}} \exp(i \xi y) \mu(y) dy\bigg| \\
& \leq \bigg|\int_{\mathbb{R}} \int_{\mathbb{R}}\exp(i \xi y)  \mu(z) Z_1(z,y)  dy\, dz \bigg| \\
 &\qquad 
 +\int_{\mathbb{R}}  \int_{\mathbb{R}} \mu(z) \vert A_1(z,y)\vert  dy \, dz
 +\int_{\mathbb{R}}  \int_{\mathbb{R}} \mu(z) \vert B_1(z,y)\vert  dy \, dz
\\
&\le |\xi|^{-2}\int_{\mathbb{R}}\int_{\mathbb{R}} \mu(z) \bigg|\frac{\partial^2}{\partial y^2} Z_1(z,y) \bigg| dy\, dz \\
&\qquad 
 +\int_{\mathbb{R}}  \int_{\mathbb{R}} \mu(z) \vert A_1(z,y)\vert  dy \, dz
 +\int_{\mathbb{R}}  \int_{\mathbb{R}} \mu(z) \vert B_1(z,y)\vert  dy \, dz\\
&\le 
c |\xi|^{-2},
\end{split}
\end{equation*}
which implies \eqref{eq: cond varphi x} since $\vert \varphi (\xi) \vert \leq 1$.  The proof of the proposition is now completed.
\end{proof}

Theorem \ref{th: as norm} is an application of the following central limit theorem for discrete stationary sequences.
Let $Y_n = (Y_{n,i}, \, i \in \mathbb{Z})$, $ n \ge 1$ be a sequence of strictly stationary discrete time $\mathbb{R}^m$ valued random process. We define the $\alpha$-mixing coefficient of $Y_n$ by
$$\alpha_{n,k} := \sup_{A \in \sigma(Y_{n,i}, \, i \le 0), \quad B \in \sigma(Y_{n,i}, \, i \ge k)}{\modch \big(\mathbb{P}(A \cap B) - \mathbb{P}(A) \mathbb{P}(B) \big) }$$
and we set $\alpha_k := \sup_{n \ge 1} \alpha_{n,k}$ (see also Section 1 in \cite{Mixing}). We denote by $Y^{(r)}$ the r-th component of an $m$ dimensional random vector $Y$. 
\begin{theorem} [Theorem 1.1 \cite{As norm}] \label{th: teorema 1.1}Assume that 
\begin{enumerate}
\item[\textnormal{(i)}] $\mathbb{E}[Y_{n,i}^{(r)}] = 0$ and $|Y_{n,i}^{(r)}| \le M_n$ for every $n \ge 1$, $i \ge 1$ and $1 \le r \le m$, where $M_n$ is a constant depending only {\modch on} $n$. 
\item[\textnormal{(ii)}] $$\sup_{i \ge 1, 1 \le r \le m} \mathbb{E}[(Y_{n,i}^{(r)})^2] < \infty.$$ 
\item[\textnormal{(iii)}] For every $1 \le r, s \le m$ and for every sequence $b_n \rightarrow \infty$ such that $b_n \le n$ for every $n \ge 1$, we have 
$$\lim_{n \rightarrow \infty} \frac{1}{b_n} \mathbb{E}\left[\sum_{i = 1}^{b_n} Y_{n,i}^{(r)} \sum_{j = 1}^{b_n} Y_{n,j}^{(s)} \right] = \sigma_{r,s}.$$
\item[\textnormal{(iv)}] There exists ${\modch \gamma_0} \in (1, \infty)$ such that $\sum_{k \ge 1} k \alpha_k^{\frac{{\modch \gamma_0}  - 1}{{\modch \gamma_0} }} < \infty$. 
\item [\textnormal{(v)}] For some constant $c > 0$ and for every $n \ge 1$, $M_n \le c n^{\frac{{\modch \gamma_0} ^2}{(3 {\modch \gamma_0} - 1)(2 {\modch \gamma_0}  - 1)}}$. 
\end{enumerate}
Then, 
$$\frac{\sum_{i = 1}^n Y_{n,i}}{\sqrt{n}} \xrightarrow{\mathcal{D}} N(0, \Sigma) \quad \mbox{as } n \rightarrow \infty,$$
where $\Sigma= (\sigma_{r,s})_{1 \le r, s \le m}$.
\end{theorem}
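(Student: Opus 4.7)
The plan is to reduce to the scalar case by the Cramér--Wold device and then apply the classical Bernstein big-block/small-block argument that is standard for CLTs under strong mixing. Fix $\theta \in \R^m$ and set $Z_{n,i} := \langle \theta, Y_{n,i}\rangle$; this is a scalar strictly stationary triangular array with the same mixing coefficients $\alpha_k$, satisfying the scalar analogues of (i)--(v) with limiting variance $\theta^T \Sigma \theta$. It therefore suffices to show that $\sum_{i=1}^n Z_{n,i}/\sqrt{n}$ converges in distribution to $N(0, \theta^T\Sigma\theta)$.

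Next I would introduce two sequences of integer block lengths $p_n$ (big blocks) and $q_n$ (small blocks) chosen to satisfy $q_n = o(p_n)$, $p_n = o(n)$, $p_n M_n = o(\sqrt{n})$, and $(n/p_n)\alpha_{q_n} \to 0$. Set $k_n = \lfloor n/(p_n+q_n)\rfloor$, partition $\{1,\dots,n\}$ into alternating big and small blocks plus a final remainder, and let $\eta_j$ and $\xi_j$ denote the corresponding partial sums of $Z_{n,i}$. Davydov's covariance inequality combined with the summability implicit in (iv) yields bounds of the form $\mathrm{Var}(\sum_j \xi_j) \le C k_n q_n$ and $\mathrm{Var}(\eta_j) \le C p_n$ that are uniform in $n$; in particular $\sum_j \xi_j / \sqrt{n}$ and the final remainder vanish in $L^2$ since $k_n q_n / n = O(q_n/p_n) \to 0$, so they can be discarded.

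For the big-block sum, the standard characteristic function inequality for $\alpha$-mixing sequences (Volkonskii--Rozanov) gives
$$\Bigl|\, \mathbb{E}\bigl[e^{it\sum_{j}\eta_j/\sqrt{n}}\bigr] - \prod_{j=1}^{k_n} \mathbb{E}\bigl[e^{it\eta_j/\sqrt{n}}\bigr]\, \Bigr| \le 16\, k_n \, \alpha_{q_n} \longrightarrow 0,$$
so it is enough to establish the CLT for a triangular array of independent copies $\tilde\eta_j$ of $\eta_j$. Their total variance converges to $\theta^T\Sigma\theta$ by assumption (iii) applied with $b_n = p_n$ (each big block contributes $\sigma_{r,s} p_n(1+o(1))$ to the covariance and there are $k_n \sim n/p_n$ of them), while the Lindeberg condition $\sum_j \mathbb{E}[(\tilde\eta_j/\sqrt{n})^2 \mathbf{1}_{\{|\tilde\eta_j|>\varepsilon\sqrt{n}\}}]\to 0$ is trivially satisfied because $|\eta_j| \le p_n M_n = o(\sqrt{n})$ makes the indicator eventually vanish.

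The principal obstacle, and the reason the exponent $\gamma_0^2/((3\gamma_0-1)(2\gamma_0-1))$ in (v) takes its peculiar form, is verifying the existence of a feasible pair $(p_n,q_n)$ realising all four constraints above simultaneously. Writing $p_n \asymp n^a$ and $q_n \asymp n^b$, the truncation constraint $p_n M_n = o(\sqrt{n})$ forces $a + \gamma_0^2/((3\gamma_0-1)(2\gamma_0-1)) < 1/2$, while the independence comparison $(n/p_n)\alpha_{q_n}\to 0$ combined with the tail information extracted from condition (iv) imposes a matching lower bound on $a + c\,b$ for an explicit $c=c(\gamma_0)$. Solving these inequalities together with $b < a$ is the algebraic heart of the proof, and the exponent in (v) is calibrated precisely so that the feasible region is nonempty.
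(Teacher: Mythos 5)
The paper does not actually prove this statement: it is imported verbatim as Theorem 1.1 of Bosq, Merlev\`ede and Peligrad \cite{As norm} and used as a black box in the proof of Theorem \ref{th: as norm}. Your Cram\'er--Wold reduction followed by a Bernstein big-block/small-block scheme is the standard route for CLTs of bounded triangular arrays under strong mixing and is, in substance, the strategy of the cited source, so there is no methodological divergence to report; the only question is whether your sketch closes.

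It does not quite close as written, and the soft spot is exactly where you declare the variance bounds ``uniform in $n$''. The only moment information available is (ii) (second moments bounded uniformly) together with the $L^\infty$ bound $M_n$, which \emph{grows} with $n$. To exploit the summability in (iv), Davydov's inequality must be applied with the exponent $\frac{\gamma_0-1}{\gamma_0}$, i.e.\ with $L^{2\gamma_0}$ norms of the summands; interpolating between $L^2$ and $L^\infty$ gives only $\|Z_{n,i}\|_{2\gamma_0}^2 \le C\, M_n^{2(\gamma_0-1)/\gamma_0}$, so the off-diagonal part of $\mathrm{Var}\bigl(\sum_j \xi_j\bigr)$ carries an extra factor $M_n^{2(\gamma_0-1)/\gamma_0}$ that is not uniformly bounded. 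Consequently the negligibility of the small blocks requires the additional constraint $(q_n/p_n)\,M_n^{2(\gamma_0-1)/\gamma_0} \to 0$, and the identification of the big-block variance with $\theta^T\Sigma\theta$ needs the same care when passing from condition (iii) to the sum over blocks. These constraints must be added to the system you solve for $(p_n,q_n)$; the exponent $\gamma_0^2/((3\gamma_0-1)(2\gamma_0-1))$ in (v) is calibrated against this \emph{full} system, not only against the truncation condition $p_nM_n=o(\sqrt{n})$ and the independence-comparison condition you record. Until the enlarged feasibility region is shown to be nonempty, the argument has a genuine gap, and the intermediate claim that the small-block variance bound is uniform in $n$ is, as stated, false.
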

The proof of Theorem \ref{th: lower d=2} is based on the following Kullback version of
the main theorem on lower bounds in \cite{Ts}, see Lemma C.1 of \cite{Strauch}:
\begin{lemma} \label{lemma: lower dim 2}
Fix $\beta, \mathcal{L} \in (0, \infty)^2$ and assume that there exists $f_0 \in \mathcal{H}_2(\beta, \mathcal{L}) $ and a finite set $J_T$ such that one can find $\left \{ f_j, \, j \in J_T \right \} \subset \mathcal{H}_2(\beta, \mathcal{L})$ satisfying
\begin{equation} \label{f_diference}
\left \| f_j - f_k \right \|_\infty \ge 2 \psi > 0 \qquad \forall j \neq k \in J_T.
\end{equation}
Moreover, denoting {\modch $\mathbb{P}_j^{(T)}$} the probability measure associated with $f_j$, $\forall j \in J_T$, $\mathbb{P}^{(T)}_j \ll \mathbb{P}^{(T)}_0$ and 
\begin{equation} \label{step2}
\frac{1}{|J_T|} \sum_{j \in J_T} KL (\mathbb{P}^{(T)}_j, \mathbb{P}^{(T)}_0) = \frac{1}{|J_T|} \sum_{j \in J_T} \mathbb{E}^{(T)}_j\left[\log\left(\frac{d \mathbb{P}^{(T)}_j}{d \mathbb{P}^{(T)}_0}(X^T)\right)\right] \le {\modch \delta} \log (|J_T|)
\end{equation}
for some ${\modch \delta} \in (0, \frac{1}{8})$. Then, for $q > 0$, we have 
$$\inf_{\tilde{\mu}_T} \sup_{\mu_b \in \mathcal{H}_2(\beta, \mathcal{L})} (\mathbb{E}^{(T)}_b[\psi^{- q} \left \| \tilde{\mu}_T - \mu_b \right \|_\infty^q])^{1/q} \ge c({\modch \delta}) > 0,$$
where the infimum is taken over all the possible estimators $\tilde{\mu}_T$ of $\mu_b$.
\end{lemma}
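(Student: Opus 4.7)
The plan is to follow the classical reduction from nonparametric estimation to multiple hypothesis testing, then invoke the Kullback (Fano-type) lower bound on the minimax probability of error.

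First, given any estimator $\tilde{\mu}_T$, I would introduce the minimum distance test
$$\hat{\jmath} \in \arg\min_{j \in J_T} \|\tilde{\mu}_T - f_j\|_\infty.$$
The key observation is that if the true density is $f_j$ and $\hat{\jmath} \neq j$, then by the definition of $\hat{\jmath}$ there exists some $k \neq j$ with $\|\tilde{\mu}_T - f_k\|_\infty \leq \|\tilde{\mu}_T - f_j\|_\infty$; combining this with the separation assumption \eqref{f_diference} and the triangle inequality gives $\|\tilde{\mu}_T - f_j\|_\infty \geq \psi$. Markov's inequality then yields
$$\mathbb{P}_j^{(T)}(\hat{\jmath} \neq j) \leq \mathbb{P}_j^{(T)}(\|\tilde{\mu}_T - f_j\|_\infty \geq \psi) \leq \psi^{-q}\, \mathbb{E}_j^{(T)}\bigl[\|\tilde{\mu}_T - f_j\|_\infty^q\bigr],$$
so that bounding the minimax testing error from below is enough.

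Second, I would invoke the Kullback version of Fano's lemma (Theorem 2.5 in Tsybakov \cite{Ts}): if the average Kullback divergence satisfies \eqref{step2} with $\delta \in (0,\tfrac{1}{8})$, then
$$\inf_{\hat{\jmath}} \sup_{j \in J_T} \mathbb{P}_j^{(T)}(\hat{\jmath} \neq j) \;\geq\; \frac{\sqrt{|J_T|}}{1+\sqrt{|J_T|}}\left(1 - 2\delta - \sqrt{\frac{2\delta}{\log |J_T|}}\right),$$
and the right-hand side is bounded below by a positive constant $c(\delta)$ as soon as $|J_T|$ is sufficiently large. Combining this with the reduction above yields
$$\inf_{\tilde{\mu}_T} \sup_{j \in J_T} \psi^{-q}\, \mathbb{E}_j^{(T)}\bigl[\|\tilde{\mu}_T - f_j\|_\infty^q\bigr] \;\geq\; c(\delta),$$
and since each $f_j$ lies in $\mathcal{H}_2(\beta, \mathcal{L})$, enlarging the supremum to all $\mu_b \in \mathcal{H}_2(\beta, \mathcal{L})$ only increases it. Taking $q$-th roots delivers the stated bound.

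The routine parts are the reduction-to-testing and Markov steps; the substantive ingredient is the Kullback–Fano inequality, whose proof proceeds via the data-processing inequality applied to the joint law $\bar{\mathbb{P}} = |J_T|^{-1}\sum_j \mathbb{P}_j^{(T)}$ and a convexity argument on the binary entropy. The only real obstacle is keeping track of the constants: one must verify that the threshold $\delta < \tfrac{1}{8}$ is precisely what is needed so that the factor $1 - 2\delta - \sqrt{2\delta/\log|J_T|}$ is bounded away from zero for all large $|J_T|$. This explains the choice of constant in the hypothesis of the lemma.
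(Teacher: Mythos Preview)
Your proposal is correct: the paper itself does not prove this lemma but simply quotes it from Lemma~C.1 of \cite{Strauch} (itself the Kullback version of Tsybakov's main lower-bound theorem in \cite{Ts}), and your sketch is precisely the standard argument found in those references---reduction to a minimum-distance test, Markov's inequality, and then Theorem~2.5 of \cite{Ts}. One small remark: the factor $1 - 2\delta - \sqrt{2\delta/\log|J_T|}$ is already bounded away from zero for every $|J_T|\ge 2$ once $\delta<\tfrac18$, so you do not actually need $|J_T|$ large to get $c(\delta)>0$ (though in the application $|J_T|\to\infty$ anyway).
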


\section{Proof of the main results}{\label{S: proof main}}

\subsection{Proof of Theorem \ref{th: upper bound}}

 By the symmetry of the covariance operator and the stationarity of the process, 
 \begin{equation*} \label{eq: upper bound variance}\begin{split}
&T \, \text{Var}(\hat{\mu}_{h,T}(x)) = \frac{1}{T}\int_0^T \int_0^T \text{Cov}(\mathbb{K}_h (x - X_t), \mathbb{K}_h (x - X_s)) ds\, dt\\
&\qquad =
 \frac{2}{T} \int_0^T (T-u) \text{Cov}(\mathbb{K}_h (x - X_u), \mathbb{K}_h (x - X_0)) du\\
&\qquad =2 \int_0^T (1 - \frac{u}{T}) \int_{\mathbb{R}} \int_{\mathbb{R}} \mathbb{K}_h (x - y) \mathbb{K}_h (x - z) g_u(y,z) dy\, dz\, du \\
 &\qquad \le \int_{\mathbb{R}} \vert \mathbb{K}_h (x - y) \vert \sup_{z \in \mathbb{R}}\int_0^\infty \vert g_u(y,z)\vert  du \, dy \int_{\mathbb{R}} \vert \mathbb{K}_h (x - z)\vert dz.
\end{split}
\end{equation*}
In the proof of Proposition \ref{prop: WCL satisfied} we have shown that
$$\sup_{z \in \mathbb{R}}\int_0^\infty \vert g_u(y,z)\vert  du \leq c(1+\mu(y)(1+ f^*(y))).$$
It follows that
$$T \, \text{Var}(\hat{\mu}_{h,T}(x)) \le c \int_{\mathbb{R}}\vert \mathbb{K}_h (x - y)\vert (1+\mu(y)(1+ f^*(y))) dy,$$
since, by the definition of the kernel function, 
$$\int_{\R} \vert \mathbb{K}_h (x - z) \vert dz  = \int_{x - h}^{x + h} \vert \mathbb{K}_h (x - z) \vert dz \le \left \| \mathbb{K}_h \right \|_\infty h \le \frac{\left \| K \right \|_\infty}{h} h = \left \| K \right \|_\infty.$$ 
Then, by the definition of $\mathbb{K}_h$, we get that
\begin{equation*} \begin{split}
&\int_{\mathbb{R}}\vert \mathbb{K}_h (x - y)\vert (1+\mu(y)(1+ f^*(y))) dy\\
&\qquad \qquad =\frac{1}{h} \int_{x - h}^{x + h} \vert K(\frac{x - y}{h})\vert  (1+\mu(y)(1+ f^*(y))) dy \\
&\qquad \qquad \leq \left \| K \right \|_\infty \int_{-1}^1   (1+ \mu(x-h\tilde{y})(1+ f^*(x - h \tilde{y}))) d\tilde{y},
\end{split}
\end{equation*}
where we have applied the change of variable $\tilde{y}:= \frac{x - y}{h}$. Now we observe that, if $|x - h \tilde{y}| \le 1$, then $f^*(x - h \tilde{y})$ is bounded by construction. Otherwise, for $|x - h \tilde{y}| > 1$, we have 
\begin{align*}
f^*(x - h \tilde{y}) & = e^{\epsilon |x - h \tilde{y}|} \le e^{\epsilon |x|} e^{ \epsilon h |\tilde{y}|} \le e^{\epsilon |x|} e^{ \epsilon}, 
\end{align*}
where in the last inequality we have used the fact that both $h$ and $|\tilde{y}|$ are smaller than $1$.  Therefore, we have shown that
$$T \, \text{Var}(\hat{\mu}_{h,T}(x)) \le c e^{\epsilon |x|},$$
where $c$ is independent of $T$, $h$ and $x$.
Finally, from the bias-variance decomposition (\ref{bv}) we obtain (\ref{pre}), which concludes the desired proof.

\subsection{Proof of Theorem \ref{th: as norm}}
 
We aim to apply Theorem \ref{th: teorema 1.1}. {\modch For this, we split the interval $[0, T]$ into $n$ intervals $[t_{i-1}, t_i]$,  where $t_i = i \Delta$ for any $i \in \left \{0,\ldots, n \right \}$, $n \Delta=T$, and $n = \lfloor T \rfloor$ with $T \ge 1$, which implies that $1 \le \Delta < 2$.}
 
For each $n \ge 1$ and  $1 \le r \le m$, we consider the sequence $(Y_{n,i}^{(r)})_{i \ge 1}$ defined as 
$$Y_{n,i}^{(r)} := \frac{1}{\sqrt{\Delta}}\left(\int_{t_{i - 1}}^{t_i} \mathbb{K}_h (x_r - X_u) du - \mathbb{E}\left[\int_{t_{i - 1}}^{t_i} \mathbb{K}_h (x_r - X_u) du\right]\right),$$
for $x_r \in \mathbb{R}$. We denote  by $Y_{n,i}$ the $\mathbb{R}^m$ valued random vector defined by $Y_{n,i} = (Y_{n,i}^{(1)},\ldots, Y_{n,i}^{(m)})$. By construction, 
$$\frac{\sum_{i = 1}^n Y_{n,i}}{\sqrt{n}}= \sqrt{T} (\hat{\mu}_{h, T} (x) - \mathbb{E}[\hat{\mu}_{h, T} (x)]),$$
where $\hat{\mu}_{h, T} (x) - \mathbb{E}[\hat{\mu}_{h, T} (x)]$ is the vector $$(\hat{\mu}_{h, T} (x_1) - \mathbb{E}[\hat{\mu}_{h, T} (x_1)],\ldots, \hat{\mu}_{h, T} (x_m) - \mathbb{E}[\hat{\mu}_{h, T} (x_m)]).$$

It is clear that $\mathbb{E}[Y_{n,i}] = 0$ for all $n \ge 1$ and $i \ge 1$. Moreover, for all $i \ge 1$, $1 \le r \le m$ and $n \ge 1$ we have 
$$|Y_{n,i}^{(r)}| \le \frac{1}{\sqrt{\Delta}} \left \| \mathbb{K}_h \right \|_\infty \Delta \le {\modch \frac{\left \| K \right \|_\infty}{h (T)} \sqrt{2}}.$$
{\modch We choose $h(T):= (\frac{1}{T})^{\frac{1}{2} - \epsilon} = (\frac{1}{n \Delta})^{\frac{1}{2} - \epsilon} \ge c (\frac{1}{n})^{(\frac{1}{2} - \epsilon)}$, for some $\epsilon \in (0, \frac12)$.}
Hence, assumption (i) holds true {\modch with $M_n := c n^{\frac{1}{2} - \epsilon}$}. Concerning assumption (ii) we remark that, for any $i \ge 1$ and any $1 \le r \le m$, 
\begin{equation*} \begin{split}
\mathbb{E}[(Y_{n,i}^{(r)})^2] &= \text{Var}\left(\frac{1}{\sqrt{\Delta}} \int_0^{\Delta} \mathbb{K}_h (x_r - X_u) du\right) = \text{Var}(\sqrt{\Delta} \hat{\mu}_{h, \Delta} (x_r) )\\
&= \Delta \text{Var}(\hat{\mu}_{h, \Delta} (x_r) ) \le \Delta \frac{c}{\Delta} = c,
\end{split}
\end{equation*}
where in the last inequality we have used (\ref{eq: upper bound variance}). We next check condition (iii). Let $b_n$ be a sequence of integers such that $b_n \rightarrow \infty$ and $b_n \le n$ for every $n$. For every $1 \le r \le m$ and $1 \le s \le m$, we have  
\begin{equation*} \label{eq: conv 1.4}\begin{split}
&\frac{1}{b_n} \mathbb{E}\left[\sum_{i = 1}^{b_n} Y_{n,i}^{(r)} \sum_{j = 1}^{b_n} Y_{n,j}^{(s)} \right] = \frac{1}{{\modch\Delta} b_n} \int_0^{{\modch\Delta} b_n} \int_0^{{\modch\Delta} b_n} \text{Cov}(\mathbb{K}_h (x_r - X_u), \mathbb{K}_h (x_s - X_v)) du \, dv \\
&= 2 \int_0^{{\modch\Delta} b_n} (1 - \frac{u}{{\modch\Delta} b_n}) \int_{\mathbb{R}} \int_{\mathbb{R}} \mathbb{K}_h (x_r - z_1) \mathbb{K}_h (x_s - z_2) g_u (z_1, z_2) dz_1 \, dz_2 \, du\\
&= 2 \int_{\mathbb{R}} \int_{\mathbb{R}} \int_0^{ {\modch\Delta} b_n} (1 - \frac{u}{{\modch\Delta} b_n}) K(w_1) K(w_2) g_u (x_r - h(T) w_1, x_s - h(T) w_2) du \, dw_1 \, dw_2,
\end{split}
\end{equation*}
where we have used Fubini's theorem and the change of variables $w_1:= \frac{x_r - z_1}{h(T)} $, $w_2:= \frac{x_s - z_2}{h(T)} $.
Using dominated convergence and the fact that $h(T) \rightarrow 0$ for $T \rightarrow \infty$ {\modch and $\Delta b_n \rightarrow \infty$ for $n \rightarrow \infty$ as $\Delta \ge 1$}, we obtain
\begin{equation*} \begin{split}
\lim_{n \rightarrow \infty} \frac{1}{b_n} \mathbb{E}\left[\sum_{i = 1}^{b_n} Y_{n,i}^{(r)} \sum_{j = 1}^{b_n} Y_{n,j}^{(s)} \right] &= 2 \int_\mathbb{R} K(w_1) \int_\mathbb{R} K(w_2) \int_0^\infty g_u(x_r, x_s) du \, dw_2 \, dw_1 \\
&= 2 \int_0^\infty g_u(x_r, x_s) du =: \sigma (x_r, x_s), 
\end{split}
 \end{equation*}
which proves (iii). Remark that it is possible to use dominated convergence theorem since we have shown in the proof of Proposition \ref{prop: WCL satisfied} that 
$$
\sup_{y \in \mathbb{R}} \vert g_u(x,y) \vert \leq c \left( u^{-1/2} {\bf 1}_{\{u \leq 2\}}+\mu(x) (1+f^{\ast}(x))e^{-\rho u} {\bf 1}_{\{u > 2\}}\right),$$
for some positive constants $c$ and $\rho$.
In particular, we have 
\begin{align*}
& |(1 - \frac{u}{ \Delta b_n}) K(w_1) K(w_2) g_u (x_r - h(T) w_1, x_s - h(T) w_2) 1_{[0, b_n]} (u) 1_{\mathbb{R}^2}(w_1, w_2)|  \\
& \le c \left( u^{-1/2} {\bf 1}_{\{u \leq 2\}}+ e^{\epsilon (\vert x_r \vert +\vert w_1 \vert)}   e^{-\rho u} {\bf 1}_{\{u > 2\}}\right) |K(w_1) K(w_2)| \in L^1(\mathbb{R}^+ \times \mathbb{R}^2), 
\end{align*}
as $K$ has support on $[-1,1]$.

We now check (iv). We remark that if a process is $\beta$-mixing, then it is also $\alpha$-mixing and the following estimation holds (see Theorem 3 in Section 1.2.2 of \cite{Mixing})
$$\alpha_k \le \beta_{Y_{n,i}} (k) = \beta_X (k) \le c e^{- \gamma_1 k}.$$
Therefore, it suffices to show that there exists ${\modch \gamma_0} \in (1, \infty)$ such that
$$\sum_{k \ge 1} k e^{- k \gamma_1 \frac{({\modch \gamma_0}-1)}{{\modch \gamma_0}}} < \infty,$$
which is true for any ${\modch \gamma_0} > 1$, so (iv) is satisfied.

We are left to show (v). Set $f({\modch \gamma_0}) := \frac{{\modch \gamma_0}^2}{(3 {\modch \gamma_0} - 1) (2 {\modch \gamma_0} -1)}$ and observe that  $f(1)=\frac{1}{2}$ and for $\gamma_0>1$, $f$ is continuous, strictly decreasing, and $\frac{1}{6}<f(\gamma_0) <\frac{1}{2}$.
Therefore, given $\epsilon \in (0, \frac 12)$, there always exists $\gamma_0>1$ such that for all $n \geq 1$,
\begin{equation*} 
 n^{\frac{1}{2} - \epsilon} \le n^{f( \gamma_0)}.
\end{equation*}

Thus, condition (v) is satisfied.
We can then apply Theorem \ref{th: teorema 1.1} which directly leads us to \eqref{eq: as norm 1} and concludes the desired proof.

\subsection{Proof of Theorem \ref{th: lower bound}}

 The proof of of Theorem \ref{th: lower bound} follows as the proof of the lower bound for $d \geq 3$ obtained in Theorem 3 of \cite{lowerbound}. Therefore, we will only explain the main steps and the principal differences. 

 \vskip 5pt
 {\it Step 1} The first step consists in showing that given a density function $f$, we can always find a drift function $b_f$ such that $f$ is the unique invariant density function of equation (\ref{eq: model lower bound}) with drift coefficient $b=b_f$. We  give the statement and proof in dimension $d=1$, as in  Propositions 2 and 3 of \cite{lowerbound} it is only done for $d \geq 2$.
 \begin{proposition} \label{prop: b pi proprieta}
 Let $f : \mathbb{R} \rightarrow \mathbb{R}$ be a $\mathcal{C}^2$ positive probability density satisfying the following conditions 
\begin{enumerate}
    \item $\lim_{y \rightarrow \pm \infty} f (y) = 0$ and $\lim_{y \rightarrow \pm \infty} f' (y) = 0$.
    \item There exist {\modch $\hat{c}_1 > 0$ and} $0<\epsilon < \frac{\epsilon_0}{|\gamma|} $, where $\epsilon_0$ is as in Assumption {\bf A4} such that, for any $y, z \in \mathbb{R}$, 
    $$f(y \pm z) \le \hat{c}_1 e^{\epsilon |z| }f (y).  $$
    \item For $\epsilon > 0$ as in 2. there exists $\hat{c}_2(\epsilon) > 0$ such that 
   $$\sup_{y < 0} \frac{1}{f(y)} \int_{- \infty}^y f (w) dw < \hat{c}_2 \; \text{ and } \; \sup_{y >0} \frac{1}{f(y)} \int_{y}^\infty f (w) dw < \hat{c}_2 .$$
    \item There exists $0 <\tilde{\epsilon} < \frac{a^2}{2 \gamma^2 c_4  \hat{c}_2 \hat{c}_4 \hat{c}_1}$ and $R>0$ such that for any $\vert y \vert>R$, $\frac{f'(y)}{f(y)} \le - \tilde{\epsilon} \sgn(y)$, where $c_4$ is as in Assumption {\bf A4}. Moreover, there exists $\hat{c}_3$ such that
   for any $y \in \mathbb{R}$, $\vert f'(y) \vert \leq \hat{c}_3 f(y)$.
    \item For any $y \in \mathbb{R}$ and $\tilde{\epsilon}$ as in 4.
    $|f''(y)| \le \hat{c}_4\tilde{\epsilon}^2 f (y).$
\end{enumerate}
Then there exists a bounded Lipschitz function $b_f$ which satisfies {\bf A2} such that $f$ is the unique invariant density to equation \textnormal{(\ref{eq: model lower bound})} with drift coefficient $b=b_f$.
\end{proposition}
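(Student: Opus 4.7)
The plan is to construct $b_f$ explicitly by solving the stationary Fokker--Planck equation $A^*_{b_f} f = 0$, where $A^*_{b_f}$ is the formal adjoint of the generator associated to equation (\ref{eq: model lower bound}) with drift $b_f$. For $d=1$ with constant $a$ and $\gamma$, this adjoint reads
\begin{equation*}
A^*_b f(x) = -(b(x)f(x))' + \frac{a^2}{2} f''(x) + A^*_J f(x), \qquad A^*_J f(y) := \int_{\R_0}[f(y-\gamma z) - f(y) + \gamma z f'(y)]\,F(z)\,dz.
\end{equation*}
Setting $A^*_b f = 0$ and integrating once from $-\infty$ to $x$, using condition 1 to kill the boundary terms $f(-\infty) = f'(-\infty) = 0$, yields the explicit candidate
\begin{equation*}
b_f(x) := \frac{a^2}{2}\,\frac{f'(x)}{f(x)} + \frac{1}{f(x)}\int_{-\infty}^{x} A^*_J f(y)\,dy.
\end{equation*}

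The remainder of the argument is to verify that $b_f$ is bounded, Lipschitz, and satisfies \textbf{A2}, and then to invoke uniqueness. Boundedness of the diffusion term follows from condition 4 via $|f'/f|\leq \hat c_3$. For the non-local term I would first use Taylor's formula together with condition 5 and condition 2 to bound the integrand,
\begin{equation*}
|A^*_J f(y)| \leq \tfrac{1}{2}\,\gamma^{2}\hat c_{4}\tilde\epsilon^{2}\hat c_{1}\Big(\!\int_{\R_0}\!z^{2} e^{\epsilon|\gamma z|}F(z)\,dz\Big) f(y) \leq \tfrac{1}{2}\,\gamma^{2} c_{4}\hat c_{4}\tilde\epsilon^{2}\hat c_{1}\,f(y),
\end{equation*}
where the integrability comes from \textbf{A4} together with $\epsilon|\gamma|\leq \epsilon_0$. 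The key identity is $\int_{\R} A^*_J f(y)\,dy = 0$ (since $A_J \mathbf 1 = 0$), which allows me to replace $\int_{-\infty}^x$ by $-\int_{x}^{\infty}$ when $x>0$ to avoid the degeneracy of $1/f(x)$ in the right tail; combined with condition 3 on both tails, this yields
\begin{equation*}
\Big|\tfrac{1}{f(x)}\int_{-\infty}^x A^*_J f(y)\,dy\Big|\leq \tfrac{1}{2}\,\gamma^{2} c_{4}\hat c_{4}\hat c_{2}\tilde\epsilon^{2}\hat c_{1},
\end{equation*}
uniformly in $x$. Lipschitz continuity follows by differentiating the two terms and using the same pointwise bounds together with $|f''|\leq \hat c_4 \tilde\epsilon^2 f$ and $|f'|\leq \hat c_3 f$.

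For \textbf{A2}, condition 4 gives $x f'(x)/f(x)\leq -\tilde\epsilon|x|$ for $|x|>R$, so
\begin{equation*}
x\,b_f(x) \leq -\tfrac{a^{2}\tilde\epsilon}{2}|x| + \tfrac{1}{2}\,\gamma^{2} c_{4}\hat c_{4}\hat c_{2}\tilde\epsilon^{2}\hat c_{1}\,|x|.
\end{equation*}
The quantitative bound $\tilde\epsilon < a^{2}/(2\gamma^{2} c_{4}\hat c_{2}\hat c_{4}\hat c_{1})$ assumed in condition 4 is precisely what makes the bracketed coefficient strictly negative, yielding $xb_f(x)\leq -c_1|x| + c_2$ with $c_1 = a^2 \tilde\epsilon/4>0$. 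I expect this tail-cancellation estimate (the use of $\int A^*_J f = 0$ together with condition 3 to get a uniform bound on the non-local contribution) to be the main technical obstacle: without it, the factor $1/f(x)$ would blow up as $x\to +\infty$ and the Lyapunov-type bound \textbf{A2} would fail.

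Finally, since $b_f$ satisfies \textbf{A1} and \textbf{A2}, while \textbf{A3}--\textbf{A5} are structural hypotheses on $F$, the earlier cited result (Lemma 2 of \cite{Chapitre 4}) guarantees that equation (\ref{eq: model lower bound}) with drift $b_f$ admits a unique invariant density. Differentiating the defining relation $b_f(x)f(x) = \frac{a^2}{2}f'(x) + \int_{-\infty}^x A^*_J f(y)\,dy$ gives $A^*_{b_f} f \equiv 0$; since $f$ is by hypothesis a $\mathcal C^2$ positive probability density, it must coincide with that unique invariant density, completing the proof.
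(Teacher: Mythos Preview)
Your proposal is correct and follows essentially the same approach as the paper. The paper defines $b_f$ piecewise as $\frac{1}{f(x)}\int_{-\infty}^{x}(\tfrac{a^2}{2}f''+A^*_d f)$ for $x<0$ and $-\frac{1}{f(x)}\int_{x}^{\infty}(\cdot)$ for $x>0$, then defers the verification that $b_f$ is bounded, Lipschitz and satisfies \textbf{A2} to Proposition~3 of \cite{lowerbound}; after integrating $f''$ and using condition~1, this is exactly your formula, and your use of $\int_{\R}A^*_J f=0$ to switch tails is precisely the mechanism behind the paper's piecewise definition. Your write-up simply makes explicit the Taylor/condition~2--5 estimates and the role of the constraint on $\tilde\epsilon$ that the paper leaves to the cited reference.
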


\begin{proof}
Let $A_d$ be the discrete part of the generator of the diffusion process $X$ solution of \eqref{eq: model lower bound} and let $A^*_d$ its adjoint.
We define $b_f$ as 
\begin{equation*}
b_f (x) = \begin{cases}
\frac{1}{f (x)} \int_{- \infty}^{x} (\frac{1}{2} a^2 f''(w) + A^*_d \, f (w)) d w, \quad &\mbox{if } x < 0;\\
 -\frac{1}{f (x)} \int_{x}^{\infty} \frac{1}{2} a^2 f''(x) (w) + A^*_d \, f (w) d w, \quad &\mbox{if } x > 0,
 \end{cases}
 \end{equation*}
 where
 $$A^*_d \, f (x) = \int_{\mathbb{R} }[f(x - \gamma z) - f(x) + \gamma z f'(x)] F(z) dz.$$
 Then, following Proposition 3 in \cite{lowerbound}, one can check  that $b_f$ is bounded, Lipschitz, and satisfies {\bf  A2}.
 Moreover, if we replace $b$ by $b_f$ in equation (\ref{eq: model lower bound}), then $f$ is the unique invariant density.
\end{proof}

{\it Step 2} The second step consists in defining two probability density functions $f_0$ and $f_1$ in
$\mathcal{H}_1(\beta, \mathcal{L})$. 

We first define $f_0(y)=c_{\eta} f(\eta \vert y \vert)$, where $\eta \in (0, \frac{1}{2})$, $c_{\eta}$ is such that $\int f_0=1$, where $f$ is defined as follows. We first consider the piecewise function
\begin{equation*}
g(x)=\begin{cases}
e^{-\vert x \vert}, &\text{ if } \vert x \vert \geq 1 \\
e^{-4(\vert x \vert-\frac12)^2}, &\text{ if } \frac12<\vert x \vert < 1 \\
1, &\text{ if } \vert x \vert \leq \frac12.
\end{cases}
\end{equation*}
Observe that $g$ is continuous, satisfies $\frac12 e^{-\vert x \vert} \leq g(x) \leq 2 e^{-\vert x \vert}$ for all $x \in \mathbb{R}$, and each piece belongs to $C^{\infty}$ and has bounded derivatives. We define $f$ as a $\mathcal{C}^{\infty}$ approximation of $g$, with bounded derivatives of all orders and satisfying
\begin{equation} \label{eq: ass f}
\frac12 e^{-\vert x \vert} \leq f(x) \leq 2 e^{-\vert x \vert}, \quad
\vert f'(\vert x \vert) \vert \leq 5 e^{-\vert x \vert}, \quad \text{and}\quad 
\vert f''(\vert x \vert) \vert \leq 14 e^{-\vert x \vert}.
\end{equation}
Observe that the two latter inequalities are satisfied by $g$ piecewise. 

It is easy to see that $\eta$ can be chosen small enough so that $f_0 \in \mathcal{H}_1(\beta, \mathcal{L})$.
Indeed, first, it is clear that all the derivatives of $f_0$ can be bounded by the constant $\mathcal{L}$ for $\eta$ small enough. Furthermore, the following bounds hold true for any $x$ and $t$ in $\R$ 
\begin{align*}
& |D^{\lfloor \beta \rfloor} f_0 (x + t) - D^{\lfloor \beta \rfloor} f_0 (x) | \\
& \le |D^{\lfloor \beta \rfloor} f_0 (x + t) - D^{\lfloor \beta \rfloor} f_0 (x) |^{\beta  -\lfloor \beta \rfloor} (2 \left \| D^{\lfloor \beta \rfloor} f_0 \right \|_\infty)^{1 - (\beta - \lfloor \beta \rfloor)} \\
& \le \left \| D^{\lfloor \beta \rfloor + 1} f_0 \right \|_\infty^{\beta - \lfloor \beta \rfloor} (2 \left \| D^{\lfloor \beta \rfloor} f_0 \right \|_\infty)^{1 - (\beta - \lfloor \beta \rfloor)} \, |t|^{\beta - \lfloor \beta \rfloor}.
\end{align*}
Again, it suffices to choose $\eta$ small enough to ensure that $$\left \| D^{\lfloor \beta \rfloor + 1} f_0 \right \|_\infty^{\beta - \lfloor \beta \rfloor} (2 \left \| D^{\lfloor \beta \rfloor} f_0 \right \|_\infty)^{1 - (\beta - \lfloor \beta \rfloor)} \le \mathcal{L},$$
which shows that $f_0 \in \mathcal{H}_1(\beta, \mathcal{L}) \subset \mathcal{H}_1(\beta, 2 \mathcal{L})$. 

We also ask that the constant $c_4$ in Assumption {\bf A4} is such that
\begin{equation}
c_4<\frac{a^2}{2\gamma^2 4^{2} 28}.
\label{eq: cond c4}
\end{equation}
This means that the jumps have to integrate an exponential function. The bound depends on the coefficients $a$ and $\gamma$ and so it depends only on the model. 

Under the conditions above it is easy to see that $f_0$ satisfies the assumptions of Proposition \ref{prop: b pi proprieta} with $\hat{c}_1=4$, $\epsilon=\eta$, $\hat{c}_2=\frac{4}{\eta}$, $R=\frac{1}{\eta}$, $\tilde{\epsilon}=\eta$ $\hat{c}_3=28 \eta$, and
$\hat{c}_4=28 $. {\modch Indeed, point 1 of Proposition \ref{prop: b pi proprieta} clearly holds true from the definition of $f_0$. To show the second point we observe that, thanks to \eqref{eq: ass f}, we have 
$$
f_0 (y \pm z) = c_n f(\eta |y \pm z|)  \le 2 c_n e^{- \eta |y|} e^{\eta |z|}  \le  4 f_0(y) e^{\eta |z|},
$$
which implies point 2 with $\hat{c}_1 = 4$ and $\epsilon = \eta$, since we can choose $\eta$ small enough to make the condition on $\epsilon$ satisfied. 
In order to prove point 3 we use again \eqref{eq: ass f}. It follows that, for any $y < 0$, 
\begin{align*}
   \frac{1}{f_0 (y)} \int_{- \infty}^y f_0(w) dw & = \frac{1}{c_n f(\eta |y|)} \int_{- \infty}^y c_n f(\eta |w|) dw \\
   & \le 2 e^{\eta |y|} \int_{- \infty}^y 2 e^{- \eta w} dw 
   = 4 e^{\eta |y|} \frac{e^{- \eta |y|}}{\eta} = \frac{4}{\eta}.
\end{align*}
For $y > 0$ an analogous reasoning applies. Thus, $f_0$ satisfies the third point with $\hat{c}_2(\epsilon) = \hat{c}_2(\eta) = \frac{4}{\eta}$. 
For the fourth point, we observe that, for $|y| > \frac{1}{\eta}$, 
$$f_0(y) = - \eta \, \text{sgn}(y) f_0(y).$$
That is, the first part of point 4 holds true for $|y| > R$, taking $R= \frac{1}{\eta}$ and $\tilde{\epsilon} = \eta$. 
Moreover, we observe that using \eqref{eq: ass f} we have, for $k = 1, 2$, 
$$
|f_0^{(k)}(y)| = |c_n f^{(k)}(\eta |y|)| \le 14 c_n \eta^k e^{- \eta |y|} \le  28 \eta^k f_0(y).
$$
This shows that both the fourth and the fifth points  hold true, with $\hat{c}_3(\eta) =  28 \eta$ and $\hat{c}_4 =  28$. 
Finally, we need to check that the condition on $\tilde{\epsilon}$ given in the fourth point which writes as
$$\tilde{\epsilon} = \eta < \frac{a^2}{2 \gamma^2 c_4 \hat{c}_2 \hat{c}_4 \hat{c}_1} = \frac{a^2 \, \eta}{2 \gamma^2 c_4\,  4 \, 28\, 4 },$$
which is equivalent to \eqref{eq: cond c4}. Hence, $f_0$ satisfies all the assumptions in Proposition \ref{prop: b pi proprieta}.} 

Therefore, $b_{0}:=b_{f_0}$ belongs to  $\Sigma(\beta, \mathcal{L})$. Recall that $b_{0}$ belongs to  $\Sigma(\beta, \mathcal{L})$ if and only if $f_0$ belongs to $\mathcal{H}_1(\beta, {\modch 2} \mathcal{L})$ and $b_{0}$ is bounded, Lipschitz and satisfies the drift condition \textbf{A2}.

We next define 
\begin{equation} \label{f1}
f_1(x)=f_0(x)+\frac{1}{M_T} \hat{K}\left(\frac{x-x_0}{{\modch H}}\right),
\end{equation}
where $x_0 \in \R$ is fixed and $\hat{K}: \mathbb{R}\rightarrow \mathbb{R}$ is a $C^{\infty}$
function with support on $[-1, 1]$ such that
$$
\hat{K}(0)=1, \quad \int_{-1}^1 \hat{K}(z) dz=0.
$$
{\modch Here $H$ is a constant and $M_T$ will be calibrated later and satisfies that $M_T \rightarrow \infty$ as $T \rightarrow \infty$. Observe that in the proof
of the lower bound for the case $d \geq 3$ presented in \cite{lowerbound}, $H$ is a function of $T$ converging to 0 as $T \rightarrow \infty$. For the case $d=1$, it suffices
to chose it 
constant and we will see below that the same computations done in \cite{lowerbound} will work in this case and it suffices to calibrate $M_T$.} 

Then it can be shown as in \cite[Lemma 3]{lowerbound} that 
if {\modch for all $\epsilon>0$ and $T$ sufficiently large, 
\begin{equation}
\frac{1}{M_T}\leq \epsilon H^{\beta} \qquad \mbox{and} \qquad \frac{1}{H}=o(M_T)
\label{eq: cond MT}
\end{equation}
as $T \rightarrow \infty$, then if $\epsilon>0$ is small enough we have that
 $b_{1}:=b_{f_1}$ belongs to  $\Sigma(\beta, \mathcal{L})$ for $T$ sufficiently large. Indeed, on one hand, (\ref{eq: cond MT}) is clearly true when $H$ is a constant. On the other hand, the same argument used in \cite[Lemma 3]{lowerbound} applies to show that $f_1$ belongs to $\mathcal{H}_{1}(\beta, 2 \mathcal{L})$ when $H$ is a constant, up to choose $\epsilon$ in \eqref{eq: cond MT} smaller than a constant depending on $\mathcal{L}$ and $H$. }

\vskip 5pt
{\it Step 3} As $ b_0, b_1  \in \Sigma (\beta, \mathcal{L})$, we can  write
$$R(\tilde{\mu}_T (x_0)) \ge \frac{1}{2} \mathbb{E}_{1}^{(T)}[(\tilde{\mu}_T (x_0) - f_1 (x_0))^2] + \frac{1}{2} \mathbb{E}_{0}^{(T)}[(\tilde{\mu}_T (x_0) - f_0 (x_0))^2],$$
where $\mathbb{E}_{i}^{(T)}$ denotes the  expectation with respect to $b_i$.
Then, following as in \cite{lowerbound}, using Girsanov's formula, we  can show that if 
\begin{equation}\label{eq:final cond MT}
\sup_{T \ge 0} T \frac{1}{M_T^2 {\modch H}} < \infty,
\end{equation}
then for sufficiently large $T$,
\begin{equation}\label{eq: risk con Mt dim 1}
R(\tilde{\mu}_T (x_0)) \ge \frac{C}{8 \lambda} \frac{1}{M_T^2},
\end{equation}
where the constants $C$ and $\lambda$ are as in Lemma 4 of \cite{lowerbound} and they do not depend on the point $x_0$.  We finally look for the larger choice of $\frac{1}{M^2_T}$ for which both \eqref{eq: cond MT} and \eqref{eq:final cond MT} hold true. It suffices to choose $M_T=\sqrt{T}$ to conclude the proof of Theorem \ref{th: lower bound}.
 
\begin{remark} \label{d12}
The two hypothesis method used above 
does not work to prove the 2-dimensional lower bound of Theorem \ref{th: lower d=2}. 
Indeed, following as above, we can define 
$$f_1(x)=f_0(x)+\frac{1}{M_T} \hat{K}\left(\frac{x-x_0}{H_1(T)}\right)\hat{K}\left(\frac{x-x_0}{H_2(T)}\right).$$
Then, it is possible to show that \eqref{eq: risk con Mt dim 1} still holds and, therefore, we should take $M_T$ such that $\frac{1}{M^2_T}= \frac{\log T}{T}$. On the other hand, condition \eqref{eq:final cond MT} now becomes
$$\sup_{T \ge 0} T \frac{1}{M^2_T} \left(\frac{H_2(T)}{H_1(T)} + \frac{H_1(T)}{H_2(T)}\right) < \infty.$$
The optimal choice of the bandwidth is achieved for $H_2(T) = H_1(T)$ which yields to
$\sup_{T \ge 0} T \frac{1}{M^2_T} < \infty,$
which is clearly not satisfied when  $\frac{1}{M^2_T}= \frac{\log T}{T}$.  
\end{remark}

\subsection{Proof of Theorem \ref{th: lower d=2}}

We will apply Lemma \ref{lemma: lower dim 2} with $\psi:= v\sqrt{\frac{\log T}{T}}$, where $v>0$ is fixed. As above we divide the proof into three steps.

\vskip 5pt

 {\it Step 1} As in  the one-dimensional case, the first step consists in showing that given a density function $f$, we can always find a drift function $b_f$ such that $f$ is the unique invariant density function of equation (\ref{eq: model lower bound}) with drift coefficient $b=b_f$, which is proved in  Propositions 2 and 3 of \cite{lowerbound}. We remark that condition (\ref{conditiona}) is needed in Proposition 3 to ensure that the terms on the diagonal of the volatility coefficient $a$ dominate on the others, which is crucial to get that $b_f$ satisfies the drift condition \textbf{A2}. 
 \vskip 5pt
{\it Step 2} We next define the probability density $f_0 \in \mathcal{H}_2(\beta, \mathcal{L}) $, the finite set $J_T$, and the set of probability densities $\left \{ f_j, \, j \in J_T \right \} \subset \mathcal{H}_2(\beta, \mathcal{L})$ needed in order to apply Lemma \ref{lemma: lower dim 2}.

We first define $f_0$ as $\pi_0$ in Section 7.2 of \cite{lowerbound}, which is the two-dimensional version of $f_0$ defined in the proof of Theorem \ref{th: lower bound}, that is,
\begin{equation} \label{f0}
f_0(x)= c_\eta f(\eta (aa^T)^{-1}_{11} |x_1|) f(\eta (aa^T)^{-1}_{22} |x_2|),\quad x = (x_1, x_2) \in \mathbb{R}^2,
\end{equation}
where $f$ is as in Step 2 of the proof of Proposition \ref{prop: b pi proprieta}.
The density $f_0$ belongs to $\mathcal{H}_2(\beta, \mathcal{L})$ by construction. 

We then set 
\begin{equation} \label{eq: def JT}
  J_T := \left \{ 1,\ldots, \lfloor \frac{1}{\sqrt{H_1} } \rfloor \right \} \times \left \{ 1,\ldots, \lfloor \frac{1}{\sqrt{H_2} } \rfloor \right \}, 
\end{equation}
{\modch  where in order to lighten the notation we will write $H_1$ and $H_2$ for $H_1(T)$ and $H_2(T)$, respectively}, which are two quantities that converge to $0$ as $T \rightarrow \infty$ and need to be calibrated. 

Finally, for $j:= (j_1, j_2) \in J_T$, we define $x_j:= (x_{j,1}, x_{j,2}) = (2 j_1 H_1, 2 j_2 H_2 )$ and we set
$$f_j(x) := f_0(x) + {\modch 2}v \sqrt{\frac{\log T}{T}} \hat{K}\left(\frac{x_1 - x_{j,1}}{H_1}\right) \hat{K}\left(\frac{x_2 - x_{j,2}}{H_2}\right),$$
where recall that $v>0$ is fixed and $\hat{K}$ is as in (\ref{f1}).

Acting as in Lemma 3 of \cite{lowerbound}, recalling that the rate $\frac{1}{M_T}$ therein is now replaced by $\sqrt{\frac{\log T}{T}}$ (see also points 1. and 3. in the proof of Proposition \ref{prop: constante per d=2} below), it is easy to see that if there exists $\epsilon > 0$ sufficiently small such that for large $T$, 
\begin{equation} \label{eq: cond holder dim 2}
\sqrt{\frac{\log T}{T}} \le \epsilon H_1^{\beta_1}, \qquad \sqrt{\frac{\log T}{T}} \le \epsilon H_2^{\beta_2},
\end{equation}
then, for any $j \in J_T$ and large $T$,
$b_j \in \Sigma(\beta, \mathcal{L}).$
In particular, 
$f_j \in \mathcal{H}_2(\beta, \mathcal{L}).$
Therefore, $\left \{ f_j, \, j \in J_T \right \} \subset \mathcal{H}_2(\beta, \mathcal{L})$.

{\modch 
In order to evaluate the difference between $f_j$ and $f_k$ we remark first of all that, as $\hat{K}$ has support on $[-1, 1]$, $\prod_{l = 1}^2 \hat{K}(\frac{x_l -x_{j,l}}{H_l})$ is different from $0$ only if $|\frac{x_l -x_{j,l}}{H_l}| \le 1$ for any $l \in \{ 1, 2 \}$. Then, 
\begin{align*}
\left \| f_j - f_k \right \|_\infty  & \ge |f_j (x_j) - f_k (x_j)| \\
& = 2 v \sqrt{\frac{\log T}{T}} [\prod_{l = 1}^2 \hat{K}(\frac{x_{j,l} -x_{j,l}}{H_l}) - \prod_{l = 1}^2 \hat{K}(\frac{x_{j,l} -x_{k,l}}{H_l})] \\
& =  2 v \sqrt{\frac{\log T}{T}} \prod_{l = 1}^2 \hat{K}(0) = 2 v\sqrt{\frac{\log T}{T}} = 2 \psi,
\end{align*}
where we have used that, as $j \neq k$, there is a $l_0 \in \{ 1, 2 \}$ such that $l_0 \neq k_0$ and so in particular, by construction, $|j_{l_0} - k_{l_0}| \ge 1$. It follows that
$$|\frac{x_{j,l_0} -x_{k,l_0}}{H_{l_0}}| = |\frac{2 j_{l_0}H_{l_0} - 2 k_{l_0}H_{l_0}}{h_{l_0}}| \ge 2 $$
and so the kernel evaluated in this point is null.
This proves the first  condition of Lemma \ref{lemma: lower dim 2}.}

\vskip 5pt
{\it Step 3} We are left to show the remaining conditions of Lemma \ref{lemma: lower dim 2}.
The absolute continuity $\mathbb{P}^{(T)}_j \ll \mathbb{P}^{(T)}_0$ and the expression for 
$\frac{d \mathbb{P}^{(T)}_j}{d \mathbb{P}^{(T)}_0}(X^T)$ are both obtained by Girsanov formula, as in Lemma 4 of \cite{lowerbound}. We have, $$KL (\mathbb{P}^{(T)}_j, \mathbb{P}^{(T)}_0) = \mathbb{E}^{(T)}_j \left[\log \left(\frac{f_j}{f_0}(X^T)\right)\right] + \frac{1}{2} \mathbb{E}^{(T)}_j \left[\int_0^T  \vert a^{-1} (b_{0} (X_u)- b_{j}(X_u))  \vert^2 du\right],$$
where the law of $X^T=(X_t)_{t \in [0,T]}$ under $\mathbb{P}^{(T)}_j$ is the one of the solution to equation (\ref{eq: model lower bound}) with $b=b_{0}$.

By the definition of the $f_j$'s it is easy to see that the first term is $o(1)$ as $T \rightarrow \infty$. In fact, as $\hat{K}$ is supported in $[-1,1]$, 
\begin{equation*} \begin{split}
\mathbb{E}^{(T)}_j \left[\log \left(\frac{f_j}{f_0}(X^T)\right)\right] 
&=\int_{\R^2} \log\bigg(1+ \frac{ {\modch 2} v \sqrt{\frac{\log T}{T}} \hat{K}\left(\frac{x_1 - x_{j,1}}{H_1}\right) \hat{K}\left(\frac{x_2 - x_{j,2}}{H_2}\right)}{f_0(x)}\bigg) f_0(x) dx \\
& \leq \bigg\vert\log\bigg(1+c_{\ast} v \sqrt{\frac{\log T}{T}}  \| \hat{K} \|^2_\infty\bigg)\bigg\vert,
\end{split}
\end{equation*}
which tends to zero as $T  \rightarrow \infty$, where   $c_{\ast}:=\frac{{\modch 8}}{c_\eta} e^{4 \eta \, k} $, $c_\eta$ is the constant of normalization introduced in the definition of $f_0$, and $k:= \max_{i=1, 2 } (a a^T)^{-1}_{i i}$. In fact, this follows from the definition of $f_0$ in (\ref{f0}).
 Since $f(x) \ge \frac{1}{2} e^{- |x|}$, we obtain
$$\frac{1}{f_0 (x)} \le \frac{1}{c_\eta} \frac{2}{e^{- \eta (a a^T)^{-1}_{11} |x_1| }} \frac{2}{e^{- \eta (a  a^T)^{-1}_{22} |x_2| }} \le \frac{4}{c_\eta}e^{\eta k (|H_1| + |x_{j,1}| + |H_2| + |x_{j,2}| )},$$
where we have also used the fact that, as $\hat{K}$ is supported in $[-1, 1]$, we have $x \in [x_{j,1} - H_1, x_{j,1} + H_1] \times [x_{j,2} - H_2, x_{j,2} + H_2]$. Finally, by the definition of $x_j$ and the fact that $H_i \rightarrow 0$ as $T \rightarrow \infty$ for $i = 1,2$ (and so for $T$ large enough they are smaller than 1), we get
\begin{equation}
\frac{1}{f_0(x)} \le \frac{4}{c_\eta}e^{4 \eta k} \quad \mbox{for any } x \in [x_{j,1} - H_1, x_{j,1} + H_1] \times [x_{j,2} - H_2, x_{j,2} + H_2].
\label{eq: stima f0}
\end{equation}

Regarding the second term, using the stationarity of the process $X^T$, we have 
$$
\mathbb{E}^{(T)}_j \left[\int_0^T  \vert a^{-1} (b_{0} (X_u)- b_{j}(X_u))  \vert^2 du\right]
=T\int_{\R^2} \vert a^{-1}(b_{0} (x)- b_{j}(x)) \vert^2 f_0(x) dx .
$$

Then, the  following asymptotic bound will be  proved at  the end of this Section.
\begin{proposition} \label{prop: constante per d=2}
For $T$ large enough, 
\begin{align*}
\int_{\R^2} \vert a^{-1}(b_{0} (x)- b_{j}(x)) \vert^2 f_0(x) dx \le  64 \frac{e^{8 \eta k}}{c_\eta^2} k^2 v^2 H_1 H_2 \left(\frac{1}{H_1} + \frac{1}{H_2}\right)^2 \frac{\log T}{T}.
\end{align*}
\end{proposition}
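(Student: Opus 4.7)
The plan is to compare $b_j$ and $b_0$ via the invariance identities $A^*_{b_j}f_j=0$ and $A^*_{b_0}f_0=0$, where $A^*$ is the adjoint of the generator. Setting $\phi_j:=f_j-f_0 = 2v\sqrt{\log T/T}\,\hat K\!\bigl(\tfrac{x_1-x_{j,1}}{H_1}\bigr)\hat K\!\bigl(\tfrac{x_2-x_{j,2}}{H_2}\bigr)$ and subtracting yields
\[
\mathrm{div}\bigl((b_j-b_0)f_j\bigr) \;=\; \tfrac12\mathrm{tr}\bigl(aa^T D^2\phi_j\bigr) + A^*_d\phi_j \;-\;\mathrm{div}(b_0\phi_j).
\]
Using the 2D analogue of the inversion performed in Proposition \ref{prop: b pi proprieta} (namely, integrating in one coordinate at a time as in Propositions 2--3 of \cite{lowerbound}), we recover an explicit representation
\[
(b_j-b_0)(x)\;=\;\frac{1}{f_j(x)}\Bigl(V_j(x)-b_0(x)\phi_j(x)\Bigr),
\]
where $V_j$ is a vector whose $i$-th component is the antiderivative, in $x_i$, of $\tfrac12\mathrm{tr}(aa^T D^2\phi_j)+A^*_d\phi_j$. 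In particular, $b_j-b_0$ is supported in the rectangle $R_j:=[x_{j,1}-H_1,x_{j,1}+H_1]\times[x_{j,2}-H_2,x_{j,2}+H_2]$.

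Next I would bound each piece on $R_j$. The diffusive contribution dominates: integrating $\partial_i^2\phi_j$ once gives a term bounded by $\|aa^T\|\cdot\|\hat K\|_\infty\|\hat K'\|_\infty\cdot 2v\sqrt{\log T/T}/H_i$, so that the $i$-th component of $V_j$ is of size $v\sqrt{\log T/T}(1/H_1+1/H_2)$ (the sum arising from cross derivatives). The nonlocal term $A^*_d\phi_j$, when integrated in one coordinate, produces a term of size $v\sqrt{\log T/T}$ (no $1/H_i$ factor) thanks to the exponential moment assumption (\ref{eq: cond c4}) on $F$, and is therefore lower order as $H_i\to 0$; similarly the $b_0\phi_j$ boundary term is of the same lower order since $b_0$ is bounded. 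Combined with the pointwise lower bound $f_0(x)\ge \frac{c_\eta}{4}e^{-4\eta k}$ on $R_j$ from \eqref{eq: stima f0} (and $f_j\ge f_0/2$ on $R_j$ for $T$ large since the perturbation tends to zero), we obtain on $R_j$
\[
\bigl|a^{-1}(b_j-b_0)(x)\bigr|^2 \;\le\; \frac{16\,e^{8\eta k}}{c_\eta^{2}}\,k^{2}\,(2v)^2\,\frac{\log T}{T}\Bigl(\tfrac{1}{H_1}+\tfrac{1}{H_2}\Bigr)^{2},
\]
where $k^2$ absorbs $\|a^{-1}\|^2$ via $\|a^{-1}\|^2\le k=\max_i(aa^T)^{-1}_{ii}$.

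Finally, multiplying by $f_0(x)\le c$ (bounded) and integrating over $R_j$, whose Lebesgue measure is $4H_1H_2$, gives
\[
\int_{\R^2}\bigl|a^{-1}(b_0-b_j)(x)\bigr|^{2}f_0(x)\,dx\;\le\;64\,\frac{e^{8\eta k}}{c_\eta^{2}}\,k^{2}\,v^{2}\,H_1H_2\Bigl(\tfrac{1}{H_1}+\tfrac{1}{H_2}\Bigr)^{2}\frac{\log T}{T},
\]
which is the announced bound. The main obstacle is the careful treatment of the nonlocal term $A^*_d\phi_j$: one must check that, after one integration in a coordinate direction, its contribution is $O(v\sqrt{\log T/T})$ and not of order $1/H_i$, which requires using both the smoothness of $\hat K$ (to Taylor-expand inside the compensator) and the exponential moment \textbf{A4} to integrate $|z|^2 F(z)$ against $F$. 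Once this is under control, everything else reduces to elementary bounds on derivatives of $\hat K$ and the pointwise lower bound on $f_0$ over $R_j$.
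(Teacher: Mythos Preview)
Your overall strategy coincides with the paper's: write $b_j^i-b_0^i = -\dfrac{\phi_j}{f_j}\,b_0^i + \dfrac{1}{f_j}\,\tilde I^i[\phi_j]$ via the linearity of the map $f\mapsto \tilde I^i[f]$, bound $1/f_j$ on the rectangle using \eqref{eq: stima f0} and $f_j\ge f_0/2$, show that the diffusive piece of $\tilde I^i[\phi_j]$ produces the leading factor $kv\sqrt{\log T/T}\,(1/H_1+1/H_2)$, and argue that the jump contribution is of lower order. All of this is exactly what the paper does.

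There is, however, a genuine gap: your claim that ``$b_j-b_0$ is supported in the rectangle $R_j$'' is false. The nonlocal piece $A_d^*\phi_j(x)$ involves $\phi_j(x-\gamma z)$ for all $z$ in the L\'evy support, so it does \emph{not} vanish outside $R_j$, and neither does its antiderivative $\tilde I_2^i[\phi_j](x)=\int_{-\infty}^{x_i}A_{d,i}^*\phi_j\,dw$. Consequently $b_j-b_0$ is nonzero on $K_T^{j\,c}:= (R_j)^c$, and your integral is missing that contribution. The paper handles this by proving two separate estimates on the complement (its points 1 and 2): $|b_j^i(x)-b_0^i(x)|\le c\,v\sqrt{\log T/T}$ for $x\in K_T^{j\,c}$, and $\int_{K_T^{j\,c}}|b_j^i-b_0^i|\,f_0\,dx\le c\,v\sqrt{\log T/T}\,H_1H_2$. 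Combining these gives $\int_{K_T^{j\,c}}|b_0-b_j|^2 f_0\,dx\le c\,v^2\frac{\log T}{T}\,H_1H_2$, which is negligible against the main term since $H_i\to 0$. Once you add this complement analysis, your argument goes through.

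Two smaller points. First, your description of $V_j^i$ as ``the antiderivative in $x_i$ of $\tfrac12\mathrm{tr}(aa^T D^2\phi_j)+A_d^*\phi_j$'' is not quite the construction used: the diffusive part of $\tilde I^i$ is $\tfrac12\sum_\ell (aa^T)_{i\ell}\,\partial_\ell\phi_j$, i.e.\ only the $i$-th row of $aa^T\nabla\phi_j$, not a primitive of the full trace; this matters for tracking the constant $k$. Second, the inequality ``$\|a^{-1}\|^2\le k=\max_i(aa^T)^{-1}_{ii}$'' is not correct in general; the paper sidesteps this by effectively absorbing the $a^{-1}$ factor into the constant via the explicit appearance of $(aa^T)_{ij}$ in $\tilde I_1^i$.
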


Taking the optimal choice for the bandwidth in Proposition \ref{prop: constante per d=2}, which is $H_1 = H_2$, we get that
$$\int_{\R^2} \vert a^{-1}(b_0 (x)- b_j(x)) \vert^2 f_0(x) dx \le 64 \frac{e^{8 \eta k}}{c_\eta^2} k^2 v^2 4 \frac{\log T}{T}. $$
In particular, after having ordered $\beta_1 \le \beta_2$, we choose $H_1= H_2= (\frac{\log T}{T})^{{\modch \alpha}}$ with ${\modch \alpha} \le \frac{1}{2 \beta_2}= (\frac{1}{2 \beta_1} \land \frac{1}{2 \beta_2})$ so that condition \eqref{eq: cond holder dim 2} is satisfied. We therefore get
\begin{equation*} \begin{split}
KL (\mathbb{P}^{(T)}_{j}, \mathbb{P}^{(T)}_{0}) \le 128 \frac{e^{8 \eta k}}{c_\eta^2} k^2 \, v^2  \log T   \le 128 \frac{e^{8 \eta k}}{c_\eta^2 {\modch \alpha}} k^2 \, v^2 \log (|J_T|),
\end{split}
\end{equation*}
being the last estimation a consequence of the fact that, by construction, 
$$\log(|J_T|)\ge {\modch \alpha} \log \left(\frac{T}{\log T}\right)={\modch \alpha} \log(T)(1+o(1)).$$
It is therefore enough to choose $v$ such that $128 \frac{e^{8 \eta k}}{c_\eta^2 {\modch \alpha}} k^2 \, v^2 < \frac{1}{8}$ (ie $v^2 < \frac{c_\eta^2 {\modch \alpha}}{1024 \, k^2 e^{8 \eta k}}$) and apply Lemma \ref{lemma: lower dim 2} to  conclude the proof of Theorem \ref{th: lower d=2}.

\subsection{Proof of Proposition \ref{prop: constante per d=2}}

The proof of Proposition \ref{prop: constante per d=2} follows similarly as Proposition 4 of \cite{lowerbound}. Indeed, we first define the set
$$K_T^j:= [x_{j,1} - H_1, x_{j,1} + H_1 ] \times [x_{j,2} - H_2, x_{j,2} + H_2 ],$$
{\modch where we recall that we write $H_1$ and $H_2$ for $H_1(T)$ and $H_2(T)$, respectively, in order to simplify the notation.} Then we show the following points for $T$ large enough:
\begin{enumerate}
\item {\modch There exists a constant $c > 0$ such that, for any $x$ in the complementary set of $K_T$, that we denote as $K_T^{j\,c}$, and for any $i \in \left \{ 1,2 \right \}$,} $$|b^i_{j}(x) - b^i_{0} (x)| \le c \, v\, \sqrt{\frac{\log T}{T}}.$$
\item {\modch There exists a constant $c > 0$ such that, for any $ i \in \left \{ 1, 2 \right \}$,} 
$$\int_{K_T^{j \, c}} |b^i_{j}(x) - b^i_{0} (x)| f_0 (x) dx \le c \, v\, \sqrt{\frac{\log T}{T}} H_1 H_2.$$
\item For any $x \in K_T^j$ and $i \in \left \{ 1, 2 \right \},$ $$|b^i_{j}(x) - b^i_{0} (x)| \le \frac{8 }{c_{\eta}} e^{4 \eta k} k v \sqrt{\frac{\log T}{T}} \left(\frac{1}{H_1} + \frac{1}{H_2}\right).$$
\end{enumerate}

The proof of the first two points follows exactly the one in Proposition 4 of \cite{lowerbound}, remarking that 
$$d_T (x) := \pi_1 (x) - \pi_0 (x) = \frac{1}{M_T} \prod_{l = 1}^d K\left(\frac{x_l - x_0^l}{h_l(T)}\right)$$
in \cite{lowerbound} is now replaced by
$$d_T^j (x) := f_j (x) - f_0 (x) = {\modch 2} v \, \sqrt{\frac{\log T}{T}} \hat{K}\left(\frac{x_1 - x_{j,1}}{H_1}\right) \hat{K}\left(\frac{x_2 - x_{j,2}}{H_2}\right),$$
and the set $$K_T := [x_0^1 - h_1 (T), x_0^1 + h_1(T) ] \times \cdots \times [x_0^d - h_d (T), x_0^d + h_d (T)]$$
introduced in \cite{lowerbound} is now replaced by $K_T^j$. We recall that $K$ and $\hat{K}$ are exactly the same kernel function. The proof of Proposition 4 of \cite{lowerbound} is based on the fact that $d_T(x)$ and its derivatives are null for $x \in K_T^c$. In the same way, $d_T^j(x)$ and its derivatives are null for $x \in K_T^{j\, c}$. Then, acting as in \cite{lowerbound}, it is easy to see that the first two points above hold true. 

Comparing the third point above with the third point of Proposition 4 of \cite{lowerbound}, it is clear that our goal is  {\modch to show that the constant $c$ that appears in the third point of Proposition 4 of \cite{lowerbound} 
is explicit and equal to $\frac{8 }{c_{\eta}} e^{4 \eta k} k$ when $d=2$}. Keeping the notation in \cite{lowerbound}, we first introduce the following quantities:
$$\tilde{I}^i_1[f_0 ] (x) := \frac{1}{2} \sum_{j = 1}^2 (a a^T)_{i j}\frac{\partial f_0}{ \partial x_j} (x),\qquad \tilde{I}^i_2[f_0 ] (x) =  \int_{- \infty}^{x_i} A^*_{d, i} f_0(w_i) dw. $$
We moreover introduce the notation
$$\tilde{I}^i [f_0]  (x) = \tilde{I}^i_1[f_0 ] (x) + \tilde{I}^i_2[f_0 ] (x).$$
According with the definition of $b$, we have 
$$b^i_{0} (x) = \frac{1}{f_0 (x)}\tilde{I}^i[f_0 ] (x), \qquad b^i_{j} (x) = \frac{1}{f_j (x)}\tilde{I}^i[f_j ] (x).$$
Since the operator $f \rightarrow \tilde{I}^i [f]$ is linear, we deduce that
\begin{equation} \label{eq: b pi1 primo paragone}
b^i_{j} (x) = \frac{1}{f_j (x)}\tilde{I}^i[f_j]  (x) = \frac{1}{f_j (x)}\tilde{I}^i[f_0 ] (x) + \frac{1}{f_j (x)}\tilde{I}^i[d_T^j ] (x).
\end{equation}
Therefore, 
$$b^i_{j} - b^i_{0} = (\frac{1}{f_j} - \frac{1}{f_0}) \tilde{I}^i[f_0] + \frac{1}{f_j} \tilde{I}^i[d_T^j] = \frac{f_0 - f_j}{f_j} \frac{1}{f_0} \tilde{I}^i[f_0] + \frac{1}{f_j} \tilde{I}^i[d_T^j] = \frac{d_T^j}{f_j} b^i_{0} + \frac{1}{f_j} \tilde{I}^i[d_T^j]. $$
We need to evaluate such a difference on the compact set $K_T^j$. For this, we will  use that  fact that $f_j = f_0 + d_T^j$, and obtain  a lower bound away from $0$. Specifically, from the definition of $d_T^j$, we get
\begin{equation} \label{eq: norma inf dT}
\left \| d_T^j \right \|_\infty \le {\modch 2} v \sqrt{\frac{\log T}{T}}  \| \hat{K}  \|_\infty^2 = {\modch 2} v \sqrt{\frac{\log T}{T}} .
\end{equation}
In particular, 
$$f_j \ge f_0 - |d_T^j| \ge f_0 - {\modch 2} v \sqrt{\frac{\log T}{T}} \ge \frac{f_0}{2},$$
since $\sqrt{\frac{\log T}{T}}\rightarrow 0$ as $T\rightarrow \infty$, so for $T$ large enough we  have $ {\modch 2} v \sqrt{\frac{\log T}{T}} \le \frac{f_0}{2}$. Then, for any $x \in K_T^j$, using \eqref{eq: stima f0} we have 
$$\frac{1}{f_j (x)} \le \frac{2}{f_0} \le \frac{8}{c_\eta} e^{4 \eta k}.$$
Moreover, as $b_0$ is bounded, we deduce that for all $x \in K^j_T$,
\begin{equation} \label{eq: diff b punto 2 start}
|b^i_{j}(x) - b^i_{0}(x)| \le \frac{ {\modch 16} v}{c_\eta} e^{4 \eta k} \left \| b_0^i \right \|_\infty \sqrt{\frac{\log T}{T}} + \frac{8 e^{4 \eta k}}{c_\eta}\tilde{I}^i[d_T^j] (x). 
\end{equation}
We therefore need to evaluate $\tilde{I}^i[d_T^j] (x) = \tilde{I}_1^i[d_T^j] (x) + \tilde{I}_2^i[d_T^j] (x)$ on $K_T^j$. As
\begin{equation} \label{eq: norma inf deriv dT}
\bigg\| \frac{\partial d_T^j}{\partial x_j} \bigg \|_\infty \le \frac{{\modch 2} v}{H_j}\sqrt{\frac{\log T}{T}}, 
\end{equation}
it clearly follows that
\begin{equation} \label{eq: I1 on KT}
\tilde{I}_1^i[d_T]^j (x) \le {\modch 2} k v \sqrt{\frac{\log T}{T}} \left(\frac{1}{H_1} + \frac{1}{H_2}\right). 
\end{equation}
Regarding $\tilde{I}_2^i[d_T^j] (x)$, we can act exactly as in the third point of Proposition 4 of \cite{lowerbound}. As $x \in K_T^j$, $x_i \in [x_{j,i} - H_i, x_{j,i} + H_i]$ for $i =1,2$. Therefore, using also the definition of $d_T^j$, the first integral is between $x_{j,i} - H_i$ and $x_i$. We enlarge the domain of integration to $[x_{j,i} - H_i, x_{j,i} + H_i]$ and then, appealing to \eqref{eq: norma inf dT} and \eqref{eq: norma inf deriv dT}  and the fact that the intensity of the jumps is finite, we get
\begin{equation*} \label{eq: I22 on KT}\begin{split}
|\tilde{I}_{2}^i[d_T^j] (x)| &\le \int_{x_{j,i} - H_i}^{x_{j,i} + H_i} \int_{\mathbb{R}^2}|d_T^j(\tilde{w}_i) - d_T^j(\tilde{w}_{i- 1}) + (\gamma \cdot z)_i \frac{\partial}{ \partial x_i} d_T^j (w_i)| F(z) dz dw \\
&\le 2 \, (\int_{\mathbb{R}^2} F(z) dz) \int_{x_{j,i} - H_i}^{x_{j,i} + H_i} \left \| d_T^j \right \|_\infty dw \\
&\qquad  + \int_{x_{j,i} - H_i}^{x_{j,i} + H_i} \int_{\mathbb{R}^2} \int_{\mathbb{R}^2} |(\gamma \cdot z)_i | \bigg \| \frac{\partial d_T^j}{\partial x_i} \bigg\|_\infty F(z) dz dw  \\
 &\le c H_i\sqrt{\frac{\log T}{T}} +  \frac{c H_i}{H_i} \sqrt{\frac{\log T}{T}},
\end{split}
\end{equation*}
for some $c > 0$. Using this together with \eqref{eq: diff b punto 2 start} and \eqref{eq: I1 on KT} it follows that, for any $x \in K^j_T$, 
\begin{align*}
|b_{j}(x) - b_{0}(x)| &\le 
c \sqrt{\frac{\log T}{T}} + \frac{8 e^{4 \eta k}}{c_\eta}k v \sqrt{\frac{\log T}{T}} \left(\frac{1}{H_1} + \frac{1}{H_2}\right) \\
&\qquad  + c H_i \sqrt{\frac{\log T}{T}} + c \sqrt{\frac{\log T}{T}} 
\\
& \le \frac{8 e^{4 \eta k}}{c_\eta}k v \sqrt{\frac{\log T}{T}} \left(\frac{1}{H_1} + \frac{1}{H_2}\right),
\end{align*}
where the last inequality is a consequence of the fact that, $\forall i \in \left \{ 1, 2 \right \}$, $H_i\rightarrow 0$ as $T \rightarrow \infty$ and so, for $T$ large enough, all the terms are negligible when compared to the second one.
Hence, the three points listed at the beginning of the proof hold true. We deduce that
\begin{align*}
&\int_{\mathbb{R}^2} |b_0(x) - b_j(x)|^2 f_0 (x) dx \\
&\qquad  = \int_{K_T^j} |b_0(x) - b_j(x)|^2 f_0 (x) dx + \int_{K_T^{j \, c}} |b_0(x) - b_j(x)|^2 f_0 (x) dx \\
& \qquad  \le c \, v^2 \frac{\log T}{T}H_1 H_2 + \frac{64 e^{8 \eta k}}{c_\eta^2}k^2 v^2 \frac{\log T}{T} \left(\frac{1}{H_1} + \frac{1}{H_2}\right)^2 |K_T^j|.
\end{align*}
We recall that $|K_T^j|= H_1 H_2$ and that, as $T \rightarrow \infty$, $H_i \rightarrow 0$. Thus, the first term is negligible compared to the second one. The desired result follows.

\section*{Acknowledgements}
The authors would like to thank the anonymous referees
for their helpful remarks that helped to improve the  first version of the paper.


\begin{thebibliography}{4}

\bibitem{Applebaum} Applebaum, D. (2009). L\'evy processes and stochastic calculus. Cambridge university press.

\bibitem{Chapitre 4} Amorino, C., Gloter, A. (2021). Invariant density adaptive estimation for ergodic jump diffusion processes over anisotropic classes. Journal of Statistical Planning and Inference 213, 106-129.


\bibitem{lowerbound} Amorino, C. (2021). Rate of estimation for the stationary distribution of jump-processes over anisotropic H\"older classes. Electronic Journal of Statistics, to appear.

\bibitem{Hawkes} Amorino, C., Dion, C., Gloter, A., Lemler, S. (2020). On the nonparametric inference of coefficients of self-exciting jump-diffusion. arXiv preprint arXiv:2011.12387.

\bibitem{Banon} Banon, G. (1978). Nonparametric identification for diffusion processes, SIAM J. Control Optim. 16, 380–395.

\bibitem{BanNgu} Banon, G.,  Nguyen, H.T. (1981). Recursive estimation in diffusion model, SIAM J. Control Optim. 19, 676–685.

\bibitem{Bla96} Blanke, D. (1996)  Estimation de la densit\'e pour des trajectoires non directement observables, Publ. Inst.
Statist. Univ. Paris 40, 21–36.

\bibitem{BarShe01} Barndorff-Nielsen, O. E., Shephard, N. (2001). Non-Gaussian Ornstein-Uhlenbeck-based models and some of their uses in financial economics. J. R. Stat. Soc., Ser. B, Stat. Methodol., 63, 167-241.

\bibitem{Bos97} Bosq, D. (1997). Parametric rates of nonparametric estimators and predictors for continuous time processes, Ann. Statist. 25, 982–1000.

\bibitem{Bos98} Bosq, D. (1998). Nonparametric Statistics for Stochastic Processes. (Second edition), Lecture Notes
Statist., 110, New York: Springer-Verlag.

\bibitem{Bosq9} Bosq, D. (1998) Minimax rates of density estimators for continuous time processes, Sankhya. Ser. A 60, 18–28.

\bibitem{BosDar99} Bosq, D., Davydov, Yu. (1999). Local time and density estimation in continuous time, Math. Methods Statist. 8, 22–45.

\bibitem{As norm} Bosq, D., Merlevède, F., Peligrad, M. (1999). Asymptotic normality for density kernel estimators in discrete and continuous time. J. Multivar. Anal. 68, 78-95.

\bibitem{Che94} Cheze-Payaud, N. (1994). Nonparametric regression and prediction for continuous-time processes, Publ.
Inst. Statist. Univ. Paris 38, 37–58.

\bibitem{Cas_Lea} Castellana, J. V., Leadbetter, M. R. (1986). On smoothed probability density estimation for stationary processes. Stoch. Process Their Appl. 21(2), 179-193.

\bibitem{Chen} Chen, Z. Q., Hu, E., Xie, L., Zhang, X. (2017). Heat kernels for non-symmetric diffusion operators with jumps. J Differ Equ., 263(10), 6576-6634.

\bibitem{Decomp} Comte, F., Lacour, C. (2013). Anisotropic adaptive kernel deconvolution. In Annales de l'IHP Probabilit\'es et statistiques (Vol. 49, No. 2, pp. 569-609).


\bibitem{Comte_Mer} Comte, F., Merlev\`ede, F. (2005). Super optimal rates for nonparametric density estimation via projection estimators. Stoch. Process Their Appl., 115, 797-826

\bibitem{Arnaud_Yoshida} Delattre, S., Gloter, A., Yoshida, N. (2020). Rate of Estimation for the Stationary Distribution of Stochastic Damping Hamiltonian Systems with Continuous Observations. arXiv preprint arXiv:2001.10423.

\bibitem{Del80} Delecroix, M. (1980). Sur l’estimation des densit\'es d’un processus stationnaire \'a temps continu.
Publications de l’ISUP, XXV, 1-2, 17-39.

\bibitem{Kut97}  Kutoyants, Y.A. (1997). Some problems of nonparametric estimation by observations of ergodic diffusion
process, Statist. and Probab. Lett. 32, 311–320.

\bibitem{Kut98} Kutoyants, Y.A. (1998). Efficient density estimation for ergodic diffusion processes, Stat. Inference Stoch.
Process. 1, 131–155.

\bibitem{Strauch_new} Dexheimer, N., Strauch, C., Trottner, L. (2020). Mixing it up: A general framework for Markovian statistics beyond reversibility and the minimax paradigm. arXiv preprint arXiv:2011.00308.

\bibitem{DL} Dion, C., Lemler, S. (2020). Nonparametric drift estimation for diffusions with jumps driven by a Hawkes process. Statistical Inference for Stochastic Processes 23, 489-515.

\bibitem{Mor_Lec} Ditlevsen, S., Greenwood, P. (2013). The Morris–Lecar neuron model embeds a leaky integrate-and-fire model. Journal of Mathematical Biology 67 239-259.

\bibitem{Mixing} Doukhan, P. (2012). Mixing: properties and examples (Vol. 85). Springer Science and Business Media.

{\modch \bibitem{friedman} Friedman, A. (1964). Partial Differential Equations of Parabolic Type, Prentice-Hall, Englewood Cliffs,
N.J.}



\bibitem{FunSch18} Funke, B., Schmisser, E. (2018). Adaptive nonparametric drift estimation of an integrated jump diffusion process. ESAIM: Probability and Statistics 22, 236-260.

\bibitem{Has80} Has’minskii, R. Z. (1980). Stability of differential equations. Germantown, MD: Sijthoff and Noordhoff.

\bibitem{Hop_Hof} Höpfner, R., Hoffmann, M., \& Löcherbach, E. (2002). Non‐parametric Estimation of the Death Rate in Branching Diffusions. Scandinavian journal of statistics, 29(4), 665-692.

\bibitem{Jud_Nem} Juditsky, A., \& Nemirovski, A. (2002). On nonparametric tests of positivity/monotonicity/convexity. Annals of statistics, 498-527.

\bibitem{Kou02} Kou, S.G. (2002). A Jump-Diffusion Model for Option Pricing. Management Science, 48, 1086-1101.

\bibitem{LamPag02} Lamberton, D., Pages, G., (2002). Recursive computation of the invariant distribution of a diffusion. Bernoulli 8(3), pp.367-405.

\bibitem{Leb97} Leblanc, F. (1997). Density estimation for a class of continuous time processes, Math. Methods Statist. 6, 171–199.

\bibitem{Lep_Spo} Lepski, O. V., \& Spokoiny, V. G. (1997). Optimal pointwise adaptive methods in nonparametric estimation. The Annals of Statistics, 2512-2546.

\bibitem{Masuda} Masuda, H. (2007). Ergodicity and exponential beta - mixing bounds for multidimensional
diffusions with jumps. Stochastic processes and their applications, 117(1), 35-56.

\bibitem{Merton76} Merton, R.C. (1976). Option pricing when underlying stock returns are discontinuous. Journal of Financial Economics 3, 125-144.

\bibitem{Ngu79} Nguyen, H. T. (1979). Density estimation in a continuous-time Markov processes. Ann. Statist. 7, 341-348.

\bibitem{Pan08} Panloup, F. (2008). Recursive computation of the invariant measure of a stochastic differential equation driven by a L\'evy process. The Annals of Applied Probability 18(2), 379-426.

\bibitem{Sch19} Schmisser, E. (2019). Non parametric estimation of the diffusion coefficients of a diffusion with jumps. Stochastic Processes and their Applications 129(12), 5364-5405.

\bibitem{Strauch} Strauch, C. (2018). Adaptive invariant density estimation for ergodic diffusions over anisotropic classes. The Annals of Statistics 46(6B), 3451-3480.

\bibitem{Tsy} Tsybakov, A. B. (1997). On nonparametric estimation of density level sets. The Annals of Statistics, 25(3), 948-969.

\bibitem{Ts} Tsybakov, A. B. (2008). Introduction to nonparametric estimation. Springer Science and Business Media.

\bibitem{Ver99} Veretennikov, A. Yu. (1999). On Castellana-Leadbetter’s condition for diffusion density estimation. Stat. Inference Stoch. Process. 2, 1, 1-9.

\bibitem{Zan01} Van Zanten, H. (2001). Rates of convergence and asymptotic normality of kernel estimators for ergodic
diffusion processes. Nonparametric Statist. 13 (6), 833-850.

\end{thebibliography}
\end{document}